\documentclass[final]{siamltex}
\usepackage{amssymb}
\usepackage{bm}
\usepackage{graphicx}
\usepackage{graphics}
\usepackage{pgfplots}
\pgfplotsset{compat=newest}
\usepackage{caption2}
\usepackage{psfrag}
\usepackage{enumitem}
\usepackage{listings}
\usepackage{cite}
\usepackage{float}
\usepackage{placeins}
\usepackage{arydshln}
\usepackage{amsmath}
\usepackage{amscd}
\usepackage{amsfonts}
\usepackage{float,verbatim}
\usepackage{latexsym}

\usepackage{color}
\usepackage{multirow}
\usepackage{booktabs}
\usepackage{lscape}
\usepackage{arydshln}
\usepackage{epstopdf}
\usepackage{cases}
\usepackage{lineno}
\newtheorem{remark}{Remark}[section]
\newtheorem{assumption}{Assumption}[section]

\title{High-order smoothness and high-order collocation approximation for Volterra integral equation with multiscale and nonlinear exponent kernel}

\author{
Wenlin Qiu\footnotemark[2]\ \footnotemark[3]
\and
Chuanwei Su\footnotemark[3]
\and
Xiangcheng Zheng\footnotemark[3]\ \footnotemark[4]
}

\begin{document}

\maketitle

\renewcommand{\thefootnote}{\fnsymbol{footnote}}

\footnotetext[2]{School of Mathematics, Yunnan Normal University,
Kunming 650500, P. R. China.}

\footnotetext[3]{School of Mathematics, Shandong University,
Jinan 250100, P. R. China.}

\footnotetext[4]{Corresponding author: xzheng@sdu.edu.cn.}

\renewcommand{\thefootnote}{\arabic{footnote}}

\begin{abstract}
	We consider a Volterra integral equation with multiscale and nonlinear exponent kernel. We propose the high-order smoothing conditions, under which the initial singularity of the solutions can be eliminated up to any prescribed order. This indicates the application of the multiscale nature of the kernel on local modification of the solutions. Then a discontinuous high-order collocation method with arbitrary polynomial degree is developed and analyzed on uniform or graded meshes based on the solution regularity. This work serves as a comprehensive extension and complement to [Zheng, Qiu and Stynes, SIAM J. Numer. Anal., to appear] in both mathematical and numerical aspects.
\end{abstract}
	
	\begin{keywords}
		Volterra integral equation, multiscale memory kernel, nonlinear exponent, regularity,
      collocation
	\end{keywords}
	
	\begin{AMS}
		65L20, 65L70
	\end{AMS}
	
	\pagestyle{myheadings}
	\thispagestyle{plain}
	\markboth{Wenlin Qiu, Chuanwei Su, Xiangcheng Zheng}{Analysis and approximation to Volterra integral equation}

\section{Introduction}
This work considers the following Volterra integral
equation with multiscale and nonlinear exponent kernel
\begin{equation}\label{vie}
u(t)+\int_0^t
\frac{u(s)}{(t-s)^{\alpha(u(s))}}\,\mathrm ds
=f(t),
\qquad t\in[0,T],
\end{equation}
where $f$ is a prescribed forcing function satisfying $f(0)=u(0)$,
and the state-dependent exponent satisfies $0\leq\alpha(\cdot) <1$.

In most studies of Volterra integral equations in the form as (\ref{vie}), the $\alpha(\cdot)$ is set as a constant, cf. \cite{Bru04,Bru17,LiangBrunner2019,WangYi21} and the references therein. In some recent studies, the exponent in the kernel is selected as a function of $t$, see e.g., \cite{LDH,LiangStynesIMAJNA,MaStynes2026,MaStynes2024,ZW_sinum20}.
One advantage of adopting the variable exponent in the kernel is to characterize the multiscale features of the underlying mechanisms, see e.g., \cite{Qiu} for the explanation of the multiscale behavior of the variable-exponent kernel and \cite{ZheMMS24} for an application of such multiscale behavior in resolving the initial non-physical singularity of subdiffusion.

Compared with the aforementioned models, the \eqref{vie} presents a more sophisticated nonlinear exponent in the kernel.  Such dependence could reflect the state-dependent intensity of the memory efforts, and has  recently been employed in \cite{cui2025} to improve the performance of graph neural networks. However, the corresponding theoretical studies remain largely undeveloped. The main challenge is that the unknown solution appears simultaneously in the integrand and in the singular exponent of the kernel, resulting in a nonlinear coupling between the solutions and the memory operator.

Recently, the work \cite{Zheng} performs  mathematical and numerical analysis for \eqref{vie}. Specifically, the well-posedness and regularity of the solutions to (\ref{vie}) has been proved, and a second-order collocation method has been rigorously analyzed. Nevertheless, there are still important problems that remain untreated:
\begin{itemize}
\item[(i)] The work \cite{Zheng} proves that the derivatives of the solutions to (\ref{vie}) are unbounded in general but, under suitable conditions on $\alpha$, the second-order derivative becomes bounded. This result suggests the possibility of eliminating initial singularities of the solutions. However, the general case, i.e. the boundedness issue of high-order derivatives and the required conditions, remains to be investigated. In particular, such singularity elimination indicates the application of the multiscale nature of the kernel on local modification of the solutions.

\item[(ii)] A second-order collocation scheme for (\ref{vie}) has been considered in \cite{Zheng}, while the general high-order scheme and the corresponding numerical analysis remain to be explored. In particular, designing and analyzing high-order schemes for nonlinear problem (\ref{vie}) seems challenging due to the coupling  between the solutions and the memory operator.
\end{itemize}

Concerning the aforementioned issues, the main contributions of the work are summarized as follows:
\begin{itemize}

\item \textit{High-order smoothness.}
We formulate a general high-order smoothing condition that can eliminate the initial singularity of the solutions up to any prescribed order. Specifically, an inductive formulation of high-order derivatives is developed, and we combine this with the Fa\`{a} di Bruno formula and partial exponential Bell polynomials to show the boundedness of the derivatives under the smoothing conditions.

\item \textit{High-order approximation.}
We develop a discontinuous high-order collocation approximation to (\ref{vie}). By the contraction mapping principle of vector-valued systems, the nonlinear algebraic system generated by the high-order collocation scheme is shown to be well-posed and stable, and the high-order convergence is established on both uniform and graded meshes depending on the solution regularity.
\end{itemize}

The rest of the paper is organized as follows. Section~\ref{sec:preliminaries}
introduces notations and existing results.
Section~\ref{sec:regularity} proves the high-order smoothness of the solutions. Section~\ref{sec:collocation-existence} formulates the nonlinear
collocation method and proves the well-posedness of numerical solutions. A preliminary mesh-point error estimate is
established in Section~\ref{sec:preliminary-error}, based on which the convergence analysis is performed in
Section~\ref{sec:error-bounds}. Numerical examples are provided in the last section to verify the theoretical analysis.

\section{Preliminaries}\label{sec:preliminaries}

\begingroup
\makeatletter
\def\@begintheorem#1#2{\par\bgroup{\normalfont #1\ #2. }\itshape\ignorespaces}
\def\@opargbegintheorem#1#2#3{\par\bgroup{\normalfont #1\ #2\ ({\upshape #3}). }\itshape\ignorespaces}
\makeatother

Let $m\ge2$ be a positive integer,
$\mathbb N=\{1,2,\ldots\}$, and $Q$ denote a
positive constant whose value is allowed to vary from one occurrence to the
next. For $1\le p\le\infty$, the space $L^p(0,T)$ is equipped with its usual
norm $\|\cdot\|_p$. When $p=\infty$, the subscript is suppressed; the same
notation $\|\cdot\|$ is also used for the uniform norm on $C[0,T]$. For
$0<\gamma<1$, $C^\gamma[0,T]$ denotes the space of H\"older-continuous
functions of exponent $\gamma$, and $C^m[0,T]$ has its standard meaning \cite{Evans2ndEd}.

We shall also use the weighted regularity class $C^{q,\nu}(0,T]$ commonly
employed for weakly singular integral equations
\cite{BPV99,Va93}. Given a nonnegative integer $q$ and
$\nu<1$, this class consists of all $z\in C[0,T]\cap C^q(0,T]$ for which, for
$k=0,1,\ldots,q$ and $0<t\le T$,
\[
 \left|z^{(k)}(t)\right|\le Q
 \begin{cases}
  1, & k<1-\nu,\\
  1+|\ln t|, & k=1-\nu,\\
  t^{1-\nu-k}, & k>1-\nu.
 \end{cases}
\]
Here $Q$ may depend on $z$, $k$, and $\nu$, but is independent of $t$.

For later use, recall the two-parameter Mittag--Leffler function
\cite[Section~3.1]{JinBook},
\[
 E_{a,b}(z)=\sum_{k=0}^{\infty}\frac{z^k}{\Gamma(ak+b)},
 \qquad a>0,\quad b\in\mathbb R.
\]
The following weakly singular Gronwall estimate is a convenient form of
\cite[Theorem~4.2]{JinBook}.

\begin{lemma}\label{lem:gronwall}
Let $\beta>0$ and $b\ge0$. Suppose that $a\in L^1(0,T)$ is nonnegative almost
everywhere and nondecreasing. If $v\in L^1(0,T)$ is nonnegative and satisfies
\[
 v(t)\le a(t)+\frac{b}{\Gamma(\beta)}
 \int_0^t\frac{v(s)}{(t-s)^{1-\beta}}\,\mathrm ds
 \quad\text{for almost every }0<t\le T,
\]
then \(v(t)\le a(t)E_{\beta,1}(bt^\beta)
 \ \text{for almost every }t\in(0,T].\)
\end{lemma}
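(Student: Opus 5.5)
The plan is to reduce the inequality to a linear Volterra equation with the kernel $b(t-s)^{\beta-1}/\Gamma(\beta)$ and solve that equation by its Neumann series. Define the operator
\[
(\mathcal K w)(t)=\frac{b}{\Gamma(\beta)}\int_0^t (t-s)^{\beta-1}w(s)\,\mathrm ds .
\]
Because $b\ge0$, this operator is positive: if $w_1\le w_2$ almost everywhere, then $\mathcal K w_1\le \mathcal K w_2$. The hypothesis says $v\le a+\mathcal K v$. Applying $\mathcal K$ repeatedly and using positivity, induction gives
\[
v\le \sum_{k=0}^{n-1}\mathcal K^k a+\mathcal K^n v .
\]

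The first key step is to compute the iterated kernels. By the Beta integral and induction on $k$,
\[
(\mathcal K^k w)(t)=\frac{b^k}{\Gamma(k\beta)}\int_0^t (t-s)^{k\beta-1}w(s)\,\mathrm ds .
\]

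Next I show that the remainder $\mathcal K^n v$ vanishes as $n\to\infty$. For $k\beta\ge1$ the kernel satisfies $(t-s)^{k\beta-1}\le T^{k\beta-1}$. Hence
\[
\|\mathcal K^n v\|_{L^\infty}\le \frac{b^nT^{n\beta-1}}{\Gamma(n\beta)}\,\|v\|_1\to0,
\]
since $\Gamma(n\beta)$ grows superexponentially. So $v\le\sum_{k\ge0}\mathcal K^k a$ almost everywhere. Everything is nonnegative, so the series converges monotonically.

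Finally I use that $a$ is nondecreasing. Then $a(s)\le a(t)$ for $s\le t$, and therefore
\[
(\mathcal K^k a)(t)\le a(t)\frac{b^k}{\Gamma(k\beta)}\cdot\frac{t^{k\beta}}{k\beta}=a(t)\frac{(bt^\beta)^k}{\Gamma(k\beta+1)} .
\]
Summing over $k$ gives $v(t)\le a(t)E_{\beta,1}(bt^\beta)$.

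The main obstacle is making the steps rigorous for $L^1$ functions and almost-everywhere inequalities. Positivity of $\mathcal K$ must hold almost everywhere. $\mathcal K$ maps $L^1$ into $L^1$ by Young's inequality, so the iterates are well defined. For the remainder estimate, note that $\mathcal K^n v$ is bounded only once $n\beta\ge1$; below that threshold the iterates are merely in $L^1$, which is harmless because the limit only concerns large $n$. Since $a$ is monotone and integrable, $a(t)$ is finite for every $t<T$, so the pointwise bound makes sense. Alternatively, the whole statement can be cited directly as \cite[Theorem~4.2]{JinBook} specialized to constant $b$.
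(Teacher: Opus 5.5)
Your Neumann-series argument is correct. The iterated-kernel formula via the Beta integral, the uniform decay of $\mathcal K^n v$ once $n\beta\ge1$, and the monotonicity bound $(\mathcal K^k a)(t)\le a(t)(bt^\beta)^k/\Gamma(k\beta+1)$ are all sound.

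The paper gives no proof of its own and simply cites \cite[Theorem~4.2]{JinBook}. The standard proof of that result is this same iteration, so your route is essentially the intended one.
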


We also record the discrete counterpart from
\cite[Theorem~6.1.19]{Bru04}.
\begin{lemma}\label{lem:discrete-gronwall}
Let $\{\gamma_n\}$ be a nonnegative, nondecreasing sequence, and let $b>0$
and $0<\nu<1$. If the nonnegative sequence $\{z_n\}$ obeys
\[
 z_n\le\gamma_n+bN^{-(1-\nu)}
 \sum_{\ell=0}^{n-1}(n-\ell)^{-\nu}z_\ell,
 \qquad 1\le n\le N,
\]
then
\(z_n\le E_{1-\nu,1}\!\left(
 b\Gamma(1-\nu)(n/N)^{1-\nu}\right)\gamma_n,
 \ 1\le n\le N.\)
\end{lemma}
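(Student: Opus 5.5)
This lemma is quoted from \cite[Theorem~6.1.19]{Bru04}, so one could simply cite it. A self-contained argument would instead mirror the continuous Gronwall estimate of Lemma~\ref{lem:gronwall}: iterate the inequality, and bound the iterated discrete kernels by Beta-function estimates. Define the operator
\[
(\mathcal K z)_n=bN^{-(1-\nu)}\sum_{\ell=0}^{n-1}(n-\ell)^{-\nu}z_\ell .
\]
The hypothesis then reads $z\le\gamma+\mathcal K z$ componentwise. Since $\mathcal K$ has nonnegative coefficients, iterating $m$ times gives
\[
z_n\le\sum_{j=0}^{m-1}(\mathcal K^j\gamma)_n+(\mathcal K^m z)_n .
\]
Because $\mathcal K$ is strictly lower triangular ($\ell<n$), we have $\mathcal K^m z=0$ for $m>n$. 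The remainder therefore vanishes, and no growth condition on $z$ is needed.

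The key step is to show by induction on $j$ that, for every nondecreasing nonnegative sequence $\gamma$,
\[
(\mathcal K^j\gamma)_n\le \frac{\bigl(b\Gamma(1-\nu)\bigr)^j}{\Gamma(j(1-\nu)+1)}\Bigl(\frac nN\Bigr)^{j(1-\nu)}\gamma_n .
\]
Since $\gamma_\ell\le\gamma_n$ for $\ell\le n$, it suffices to take $\gamma\equiv1$ and prove the bound with $\gamma_n$ replaced by $1$. The case $j=0$ is trivial. For the inductive step, insert the bound for $j$ into $\mathcal K$. This reduces matters to the discrete-convolution estimate
\[
\sum_{\ell=0}^{n-1}(n-\ell)^{-\nu}\ell^{\,j(1-\nu)}\le B\bigl(1-\nu,\,j(1-\nu)+1\bigr)\,n^{(j+1)(1-\nu)} .
\]
Granting this, the factors of $N$ combine to $(n/N)^{(j+1)(1-\nu)}$. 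Writing the Beta function as $\Gamma(1-\nu)\Gamma(j(1-\nu)+1)/\Gamma((j+1)(1-\nu)+1)$, the $\Gamma(j(1-\nu)+1)$ cancels, and the bound for $j+1$ follows.

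The main obstacle is this sum-versus-integral comparison. I would handle it by bounding each summand by an integral over $[\ell,\ell+1]$. On that interval $(n-\ell)^{-\nu}\le(n-s)^{-\nu}$ because $n-s\le n-\ell$, and $\ell^{\,j(1-\nu)}\le s^{\,j(1-\nu)}$ because $j(1-\nu)\ge0$. Hence each summand is at most $\int_\ell^{\ell+1}(n-s)^{-\nu}s^{\,j(1-\nu)}\,\mathrm ds$. Summing over $\ell$ gives $\int_0^n(n-s)^{-\nu}s^{\,j(1-\nu)}\,\mathrm ds=B(1-\nu,j(1-\nu)+1)\,n^{(j+1)(1-\nu)}$. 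Both monotonicities point the right way, so there is no loss.

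Finally, sum the bound over $j\ge0$ with $\gamma_n$ factored out. This yields $z_n\le\sum_{j\ge0}\frac{(b\Gamma(1-\nu)(n/N)^{1-\nu})^j}{\Gamma(j(1-\nu)+1)}\gamma_n=E_{1-\nu,1}\bigl(b\Gamma(1-\nu)(n/N)^{1-\nu}\bigr)\gamma_n$, which is the claim.
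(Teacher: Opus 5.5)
The paper gives no proof of this lemma; it is quoted from Brunner's monograph (Theorem 6.1.19). Your self-contained argument is therefore a genuinely different route from the paper's citation, and it is correct. The iteration terminates because $(\mathcal K^{m}z)_n=0$ for $m>n$. The bound $(\mathcal K^j\gamma)_n\le\gamma_n(\mathcal K^j\mathbf 1)_n$ follows from the monotonicity of $\gamma$ and the nonnegativity of the weights. Both comparisons on $[\ell,\ell+1]$ go the right way: this includes $\ell=n-1$, where $(n-s)^{-\nu}$ is integrable, and $\ell=0$, $j=0$ with the convention $0^0=1$. Finally, the Beta identity makes the factors $\Gamma(j(1-\nu)+1)$ cancel exactly, so the partial sums are dominated by the Mittag-Leffler series. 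The citation buys brevity. Your proof buys an explicit constant and transparency about which hypotheses are actually used.

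That transparency exposes one hypothesis that the statement as printed omits. The inequality is imposed only for $1\le n\le N$, yet the sum contains $z_0$. When you write ``$z\le\gamma+\mathcal K z$ componentwise'' and iterate, you are also using the $n=0$ instance, namely $z_0\le\gamma_0$ (the sum is empty there). This is not a defect of your method, but you must state it, because without it the lemma is false. For example, take $N=1$, $\gamma_0=\gamma_1=1$, $z_0$ arbitrarily large and $z_1=1+bz_0$. The hypothesis at $n=1$ holds, but $z_1$ exceeds the fixed number $E_{1-\nu,1}(b\Gamma(1-\nu))$.

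In both places where the paper applies the lemma, the sums start at $j=1$: the a priori bound for $U$ and the error-propagation lemma. There one may set $z_0=0$, so your proof covers both uses. You should add the assumption $z_0\le\gamma_0$, with $\gamma$ nondecreasing from index $0$, or equivalently start the sum at $\ell=1$.
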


Throughout this work, we follow \cite{Zheng} to impose the following assumptions on the nonlinear exponent
function $\alpha(\cdot)$.

\begin{assumption}\label{ass:nonlinear-exponent}
The exponent function $\alpha$ is subject to the following conditions.
\begin{enumerate}[label=(\roman*),leftmargin=*,itemsep=1pt,topsep=2pt]
\item There is a constant $\alpha^*\in(0,1)$ such that
$0\le\alpha(x)\le\alpha^*$ for every $x\in\mathbb R$.
\item The function $\alpha$ belongs to $C^1(\mathbb R)$. In addition, for each
$M\ge0$, there exists a $Q_M\ge0$ so that
$|\alpha(x)|+|\alpha'(x)|\le Q_M$ and
$|\alpha'(x)-\alpha'(y)|\le Q_M|x-y|$ whenever $|x|,|y|\le M$.
\end{enumerate}
\end{assumption}

The following theorem collects the well-posedness and regularity results
established in \cite
{Zheng}.

\begin{theorem}\label{thm:zheng-results}
Suppose that Assumption~\ref{ass:nonlinear-exponent} holds, and set
$\alpha_0\mathrel{:=}\alpha(f(0))$. Then the following statements hold.
\begin{enumerate}[label=(\roman*),leftmargin=*,itemsep=2pt,topsep=2pt]
\item If $f\in C[0,T]$, then equation~\eqref{vie} has a unique solution
$u\in C[0,T]$.
\item If $f\in C^{m,\alpha_0}(0,T]$ and
$\alpha\in C^m(\mathbb R)$ for some $m\in\mathbb N$, then equation~\eqref{vie}
has a unique solution $u\in C^{m,\alpha_0}(0,T]$.
\item If $f\in C^2[0,T]$, $\alpha\in C^2(\mathbb R)$, and
$\alpha_0=\alpha'(f(0))=0$, then $u\in C^2[0,T]$ and
$\|u^{(k)}\|\le Q$ for $k=1,2$.
\end{enumerate}
\end{theorem}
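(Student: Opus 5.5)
Theorem~\ref{thm:zheng-results} collects results from \cite{Zheng}, so the plan is to reconstruct the arguments there in three stages, one per item.

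\emph{Item (i).} I would first show local existence by a fixed-point argument. On $C[0,\delta]$ define
\[
(\mathcal T u)(t)=f(t)-\int_0^t u(s)(t-s)^{-\alpha(u(s))}\,\mathrm ds .
\]
On a ball of radius $M$ around $f$, Assumption~\ref{ass:nonlinear-exponent}(i) gives $(t-s)^{-\alpha(u)}\le 1+(t-s)^{-\alpha^*}$, so $\mathcal T$ maps the ball into itself for small $\delta$. For the Lipschitz estimate, the mean value theorem in the exponent gives
\[
|(t-s)^{-\alpha(u)}-(t-s)^{-\alpha(v)}|\le Q_M\,|\ln(t-s)|\,\bigl(1+(t-s)^{-\alpha^*}\bigr)\,|u-v|,
\]
which is still integrable. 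Hence $\mathcal T$ is a contraction for small $\delta$.

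A priori bounds then come from Lemma~\ref{lem:gronwall}. Bound $|u(t)|\le \|f\|+\int_0^t (1+(t-s)^{-\alpha^*})|u(s)|\,\mathrm ds$, apply the lemma with $\beta=1-\alpha^*$, and conclude that the solution cannot blow up. Continuation to all of $[0,T]$ then follows by the usual stepping argument. Uniqueness comes from the same Lipschitz kernel bound together with Lemma~\ref{lem:gronwall}, after absorbing $|\ln(t-s)|$ into a slightly larger exponent $\alpha^*+\epsilon<1$.

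\emph{Item (ii).} I would work in $C^{m,\alpha_0}(0,T]$ and proceed by induction on the derivative order. Substitute $s=t-\tau$, i.e. write the integral as $\int_0^t u(t-\tau)\tau^{-\alpha(u(t-\tau))}\,\mathrm d\tau$, and differentiate in $t$. This yields a linear Volterra equation for $u'$ with weakly singular kernel. Its source terms contain $f'$, the boundary term $u(0)t^{-\alpha(u(0))}=u(0)t^{-\alpha_0}$, and integrals involving $\alpha'(u)u'\ln\tau\,\tau^{-\alpha(u)}$.

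The singular behaviour is generated by the boundary term $t^{-\alpha_0}$. This is exactly the $t^{1-\alpha_0-k}$ pattern at $k=1$. Higher derivatives follow by repeated differentiation after a dilation $s=t\theta$. I would then estimate the resulting terms using the weighted-space bounds on lower derivatives and Lemma~\ref{lem:gronwall}.

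The main obstacle is controlling the mixed terms $\alpha(u(t-\tau))-\alpha_0$ near $t=0$. These must be shown to be $O(t^{1-\alpha_0})$, so that factors of the form $\tau^{-\alpha(u)}$ versus $\tau^{-\alpha_0}$ do not worsen the singularity. I would establish this using the Hölder continuity of $u$ obtained at the previous induction step.

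\emph{Item (iii).} Here $\alpha_0=0$ removes the $t^{-\alpha_0}$ blow-up in the boundary term. The condition $\alpha'(f(0))=0$ makes $\alpha(u(s))=O(|u(s)-u(0)|^2)$, which kills the logarithmic singularity in the differentiated kernel at $s=0$. Redoing the two differentiations with these cancellations leaves only bounded source terms. Lemma~\ref{lem:gronwall} then gives $\|u'\|,\|u''\|\le Q$.
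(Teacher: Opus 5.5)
\paragraph{Preliminary.} The paper gives no proof of this theorem; it is quoted from \cite{Zheng}. Your sketch therefore has to be judged on its own merits.

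\paragraph{Item (i) is sound.} The local contraction works. The bound $|u|\le Q\|f\|$ from Lemma~\ref{lem:gronwall} keeps the history term, and hence the continuation step, uniform. Uniqueness via $|\ln(t-s)|(t-s)^{-\alpha^*}\le Q(t-s)^{-\alpha^*-\epsilon}$ is standard.

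\paragraph{The gap is in (ii) and (iii).} Both arguments begin by differentiating \eqref{vie} in $t$, and that presupposes what is to be proved. Until you know $u\in C^k(0,T]$ and that the differentiated integrals converge, there is no equation for $u^{(k)}$. What you derive are a priori bounds for a hypothetical smooth solution, not a proof that the derivatives exist. This fails concretely in two places.
\begin{itemize}
\item For $k\ge2$, the expected behaviour $|u^{(k)}(t)|\sim t^{1-\alpha_0-k}$ is not integrable at $0$. So the $\tau$-form term $\int_0^tK(t,\tau)u^{(k)}(t-\tau)\,\mathrm d\tau$ diverges in general. Your dilation $s=t\theta$ keeps the top-order term integrable, but it is still a formal computation.
\item In (iii), part (ii) only gives $|u''(t)|\le Qt^{-1}$. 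The twice-differentiated equation you want to ``redo'' therefore contains an integral of $u''$ that is not known to converge. Lemma~\ref{lem:gronwall} also requires $v\in L^1(0,T)$, and that integrability is essentially the conclusion $\|u''\|\le Q$ itself.
\end{itemize}

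\paragraph{What is missing is a mechanism that produces the derivatives.} One option is difference quotients.
\begin{itemize}
\item For fixed $h$, $D_hu(t)=(u(t+h)-u(t))/h$ is bounded. In (iii), $D_hu'$ is bounded too, once $u\in C^1[0,T]$ has been obtained from the once-differentiated equation. That step is legitimate by (ii), since there $|u'|\le Q(1+|\ln t|)$ is integrable.
\item Subtract \eqref{vie} (respectively the $u'$-equation) at $t+h$ and at $t$, and use local Lipschitz bounds for $\alpha$ and $\alpha'$. This gives a weakly singular Volterra inequality whose source is bounded independently of $h$.
\item A Gronwall argument then bounds the quotients uniformly in $h$. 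This gives Lipschitz continuity of $u$ on $[\delta,T]$ (respectively of $u'$). After that, the differentiated equation holds a.e., and its right-hand side identifies a continuous derivative.
\item In (iii), the boundary piece $h^{-1}\int_t^{t+h}K(t+h,\tau)u'(t+h-\tau)\,\mathrm d\tau$ stays bounded exactly because $\alpha_0=\alpha'(u(0))=0$. This is the precise role of the smoothing condition.
\end{itemize}
A second option is to prove uniform $C^{m,\alpha_0}(0,T]$ bounds on the Picard iterates, together with locally uniform convergence of their derivatives on $(0,T]$.

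\paragraph{A smaller point.} Lemma~\ref{lem:gronwall} requires a nondecreasing $a$, and the source $u(0)t^{-\alpha_0}$ is not. In your dilated form this is easily repaired. For $v(t)=t^{k-1+\alpha_0}|u^{(k)}(t)|$, the top-order kernel becomes $(s/t)^{1-\alpha_0}$ times a kernel bounded by $Q(t-s)^{-\gamma}$ with $\gamma<1$. So the a priori part of your plan is workable, but only after existence of the derivatives has been secured.
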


\section{High-order smoothness}\label{sec:regularity}

Motivated by
Theorem~\ref{thm:zheng-results}(iii), which suggests that the initial singularity can be weakened by imposing certain conditions on the initial behavior of $\alpha$, we introduce the following general
$m$th-order smoothing condition:
\begin{equation}\label{eq:reg4}
 \alpha_0=\alpha'(f(0))=\cdots=
 \alpha^{(m-1)}(f(0))=0, \qquad m\in\mathbb N.
\end{equation}

\begingroup
\raggedbottom
\makeatletter
\def\@begintheorem#1#2{\par\bgroup{\normalfont #1\ #2. }\itshape\ignorespaces}
\def\@opargbegintheorem#1#2#3{\par\bgroup{\normalfont #1\ #2\ ({\upshape #3}). }\itshape\ignorespaces}
\makeatother
\providecommand{\R}{\mathbb{R}}
\providecommand{\dd}{\,\mathrm{d}}
\providecommand{\norm}[1]{\left\lVert #1\right\rVert}
\providecommand{\abs}[1]{\left|#1\right|}
\providecommand{\one}{\mathbf{1}}
\providecommand{\pplus}[1]{\left(#1\right)_{+}}
\providecommand{\coloneqq}{\mathrel{\mathop:}=}
\providecommand{\eqqcolon}{=\mathrel{\mathop:}}
\newenvironment{volterraremarkstar}
  {\par\addvspace{6pt}\noindent\textit{Remark.}\ }
  {\par\addvspace{4pt}}
\newcommand{\volterraqedsymbol}{\ensuremath{\square}}
\newif\ifvolterraqedplaced
\newcommand{\volterraqedhere}{%
  \global\volterraqedplacedtrue\tag*{\volterraqedsymbol}}
\newenvironment{volterraproof}
  {\global\volterraqedplacedfalse\par\noindent\textit{Proof}.\ \ignorespaces}
  {\ifvolterraqedplaced
     \global\volterraqedplacedfalse
   \else
     \unskip\nobreak\hfill\volterraqedsymbol
   \fi\par\addvspace{4pt}}

Under condition~\eqref{eq:reg4}, we intend to eliminate the initial singularity of the
solutions to problem~\eqref{vie} at any prescribed order. We first
present two auxiliary lemmas.

\begingroup
\begin{lemma}\label{lem:derivative-recursion}
Let $n$ be a positive integer. Assume that $f\in C^{n+1}[0,T]$,
$\alpha\in C^{n+1}(\mathbb R)$, and the $(n+1)$th-order smoothing condition holds, and assume in addition that
$u\in C^n[0,T]$. Then,
for $1\le k\le n+1$ and $t\in(0,T]$,
\begin{equation}\label{eq:derivative-recursion}
u^{(k)}(t)=f_k(t)+\int_0^t
\frac{u(t-s)\alpha'(u(t-s))\ln s-1}
{s^{\alpha(u(t-s))}}u^{(k)}(t-s)\,\mathrm ds.
\end{equation}
Here
\(
f_1(t)\coloneqq f'(t)-u(0)$ and $
f_{k+1}(t)\coloneqq f_k'(t)-u^{(k)}(0)-h_k(t),
\)
where
\begin{align}\label{hkt}
  h_k(t)\coloneqq\int_0^t\frac{\mathrm d}{\mathrm dt}
\left(
\frac{1-u(t-s)\alpha'(u(t-s))\ln s}
{s^{\alpha(u(t-s))}}
\right)u^{(k)}(t-s)\,\mathrm ds, \quad 1\le k\le n.
\end{align}
\end{lemma}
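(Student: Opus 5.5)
The plan is to prove \eqref{eq:derivative-recursion} by induction on $k$, differentiating \eqref{vie} after the change of variable $s\mapsto t-s$. Set
\[
K(t,s)\coloneqq \frac{1-u(t-s)\alpha'(u(t-s))\ln s}{s^{\alpha(u(t-s))}}, \qquad 0<s\le t .
\]
This is exactly the kernel in \eqref{eq:derivative-recursion} up to sign, and $h_k(t)=\int_0^t\partial_tK(t,s)\,u^{(k)}(t-s)\,\mathrm ds$. After the substitution, \eqref{vie} becomes
\[
u(t)+\int_0^t u(t-s)\,s^{-\alpha(u(t-s))}\,\mathrm ds=f(t).
\]

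\emph{Case $k=1$.} Formally differentiating in $t$ gives a boundary term $u(0)\,t^{-\alpha(u(0))}$, which equals $u(0)$ because $\alpha_0=0$. The integrand derivative is
\[
\partial_t\bigl[u(t-s)s^{-\alpha(u(t-s))}\bigr]=u'(t-s)\,s^{-\alpha(u(t-s))}\bigl(1-u(t-s)\alpha'(u(t-s))\ln s\bigr)=K(t,s)\,u'(t-s).
\]
Hence $u'(t)=f'(t)-u(0)-\int_0^tK(t,s)u'(t-s)\,\mathrm ds$, which is \eqref{eq:derivative-recursion} with $f_1=f'-u(0)$.

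\emph{Inductive step.} Assume $u^{(k)}=f_k-\int_0^tK(t,s)u^{(k)}(t-s)\,\mathrm ds$ for some $k\le n$, and differentiate again.
\begin{itemize}
\item The boundary term is $K(t,t)u^{(k)}(0)$. Since $\alpha(u(0))=\alpha_0=0$ and $\alpha'(u(0))=\alpha'(f(0))=0$, we have $K(t,t)=1$, so this term is $u^{(k)}(0)$.
\item Differentiating the integrand by the product rule yields $-h_k(t)$.
\item The remaining term is $-\int_0^tK(t,s)u^{(k+1)}(t-s)\,\mathrm ds$.
\end{itemize}
This gives exactly $f_{k+1}=f_k'-u^{(k)}(0)-h_k$. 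Only $\alpha_0=\alpha'(f(0))=0$ is used to kill the boundary terms. The higher parts of the $(n+1)$th-order smoothing condition, together with $f,\alpha\in C^{n+1}$, serve to make $f_k$ differentiable, i.e.\ to make $h_k$ and its derivatives meaningful. They also guarantee $u\in C^{n+1}(0,T]$ via Theorem~\ref{thm:zheng-results}(ii) with $\alpha_0=0$.

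\emph{Justifying differentiation under the integral.} The main work is justifying each formal differentiation, and I would do this by splitting the integral at $s=t/2$.
\begin{itemize}
\item On $(0,t/2]$ the argument $t-s\ge t/2$ stays away from $0$. There $u\in C^{n+1}$, and the $t$-derivative of the integrand is dominated by $Q(1+|\ln s|^2)s^{-\alpha^*}$. This bound is integrable and uniform for $t$ in a compact subset of $(0,T]$, so the Leibniz rule applies. Here $\partial_tK$ involves $u'$, $\alpha'$, $\alpha''$ (hence $\ln^2 s$ terms), and the bounds use Assumption~\ref{ass:nonlinear-exponent} and the boundedness of $u$ and $u'$ from the induction hypothesis.
\item On $[t/2,t]$ I would substitute $\sigma=t-s$, so the piece becomes $\int_0^{t/2}K(t,t-\sigma)u^{(k)}(\sigma)\,\mathrm d\sigma$. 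Now the kernel is smooth in $t$ (because $t-\sigma\ge t/2>0$), and the variable limit produces the boundary contributions explicitly.
\end{itemize}
Recombining the two pieces and reverting the substitution yields the claimed identity on every compact subset of $(0,T]$. For $k\le n$ all terms are absolutely integrable, since $u^{(k)}$ is bounded by the hypothesis $u\in C^n[0,T]$.

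\emph{Main obstacle.} The delicate step is the last one, $k=n+1$. There $u^{(n+1)}$ is only known on $(0,T]$, with a possible blow-up at $0$ of the size allowed by $C^{n+1,0}(0,T]$, so $\int_0^tK(t,s)u^{(n+1)}(t-s)\,\mathrm ds$ near $s=t$ is not obviously integrable. I would first try to show that $u^{(n+1)}(\sigma)K(t,t-\sigma)$ is integrable near $\sigma=0$. Failing that, I would interpret this integral as $\lim_{\varepsilon\to0}\int_0^{t-\varepsilon}$ and handle it via the splitting above: the term involving $u^{(n)}(\varepsilon)$ converges to the boundary value $u^{(n)}(0)$ since $u\in C^n[0,T]$, which is precisely what makes the formula for $f_{n+1}$ consistent.
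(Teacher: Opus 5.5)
Your proposal is correct and follows the paper's proof: induction on $k$, differentiating after the substitution $s\mapsto t-s$, with the inductive-step boundary term $K(t,t)u^{(k)}(0)=u^{(k)}(0)$ and the product rule producing $-h_k$. You are right that $K(t,t)=1$ needs $\alpha'(f(0))=0$ in addition to $\alpha_0=0$, which the paper's wording leaves implicit.

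The paper differentiates formally even at the step producing $k=n+1$, where $u^{(n+1)}$ is not yet known to be integrable near $0$. Your reading of that integral as $\lim_{\varepsilon\to0}\int_0^{t-\varepsilon}$ is therefore a legitimate and more careful treatment of that step, not a gap. One detail in your sketch: the $u^{(n)}(0)$ boundary term comes from integrating by parts in $\sigma=t-s$ on $[\varepsilon,t/2]$, with the $u^{(n)}(\varepsilon)$ term tending to $u^{(n)}(0)$. It does not come from the moving limit $t/2$, whose contributions cancel between the two pieces.
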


\begin{proof}
 By Theorem~\ref{thm:zheng-results}(ii) we have
$u\in C^{n+1}(0,T]$. Then we proceed by induction on $k$. For $k=1$,
differentiating \eqref{vie} for $t\in(0,T]$ and using
$\alpha_0=0$, we obtain
\begin{align*}
u'(t)
&=f'(t)-u(0)+\int_0^t
\frac{u(t-s)\alpha'(u(t-s))\ln s-1}
{s^{\alpha(u(t-s))}}u'(t-s)\,\mathrm ds .
\end{align*}
Thus the assertion holds for $k=1$.

Suppose that the assertion holds for $k=\ell\le n$. Differentiating the
corresponding equation and using
$\alpha_0=0$ at the upper integration limit give
\begin{align*}
u^{(\ell+1)}(t)
&=f_\ell'(t)-u^{(\ell)}(0)
+\int_0^t\frac{\mathrm d}{\mathrm dt}
\left(
\frac{u(t-s)\alpha'(u(t-s))\ln s-1}
{s^{\alpha(u(t-s))}}
\right)u^{(\ell)}(t-s)\,\mathrm ds \\
&\quad+\int_0^t
\frac{u(t-s)\alpha'(u(t-s))\ln s-1}
{s^{\alpha(u(t-s))}}u^{(\ell+1)}(t-s)\,\mathrm ds \\
&=f_{\ell+1}(t)+\int_0^t
\frac{u(t-s)\alpha'(u(t-s))\ln s-1}
{s^{\alpha(u(t-s))}}u^{(\ell+1)}(t-s)\,\mathrm ds.
\end{align*}
This completes the
induction.
\end{proof}
\endgroup

\begin{lemma}\label{lem:regularity}
Suppose the same hypotheses as in Lemma
\ref{lem:derivative-recursion}. Then
$h_k\in C^{n-k}[0,T]$ for $1\le k\le n$, where $h_k$ is defined by
\eqref{hkt}.
\end{lemma}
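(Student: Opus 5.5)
We will show $h_k\in C^{n-k}[0,T]$ by computing the $t$-derivative inside \eqref{hkt} explicitly and checking that all singular factors it produces near $s=0$ are cancelled by the smoothing condition. Write $K(x,s)\coloneqq (1-x\alpha'(x)\ln s)\,s^{-\alpha(x)}=(1-x\alpha'(x)\ln s)\,e^{-\alpha(x)\ln s}$. Then
\[
h_k(t)=\int_0^t \partial_x K(u(t-s),s)\,u'(t-s)\,u^{(k)}(t-s)\,\mathrm ds .
\]
After the substitution $\tau=t-s$ this is a convolution-type integral $\int_0^t G(u(\tau),t-\tau)\,u'(\tau)u^{(k)}(\tau)\,\mathrm d\tau$ with $G=\partial_xK$. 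We have $u'u^{(k)}\in C^{n-k}[0,T]$ because $u\in C^n[0,T]$ by hypothesis. The aim is therefore to show that differentiating in $t$ up to $n-k$ times yields continuous functions.

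The key structural observation comes from expanding $G(x,s)$ via the Fa\`a di Bruno formula, using partial Bell polynomials for the derivatives of $e^{-\alpha(x)\ln s}$ in $x$. This shows that $\partial_x^jK(x,s)$, for $j\le n+1-\ldots$, is a finite sum of terms $c(x)(\ln s)^p s^{-\alpha(x)}$. Here each coefficient $c$ is a product of derivatives $\alpha^{(i)}(x)$ (and powers of $x$), and every factor $\ln s$ comes paired with at least one derivative of $\alpha$. Each $t$-differentiation of $h_k$ produces two kinds of terms. Differentiating through the inner argument $u(\tau)$ increases $j$ and yields higher derivatives of $u$ at $\tau$. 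Differentiating through the $s$-variable $t-\tau$ yields boundary terms at $\tau=t$. We handle the latter by moving the derivative onto the smooth factor, exactly as in the proof of Lemma~\ref{lem:derivative-recursion}: we write the integral in the form \eqref{hkt} with the variable $s$, so that $t$ appears only in $u^{(\cdot)}(t-s)$ and in the upper limit. The derivative of order $i$ then gives $\sum \int_0^t (\cdots)(s)\,\partial^{i}_t[\text{smooth in }t-s]\,\mathrm ds$ plus boundary terms evaluated at $s=t$, i.e.\ at $u(0)=f(0)$.

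Carrying this out, the $i$th derivative ($i\le n-k$) of $h_k$ is a sum of two kinds of terms. The first kind consists of integrals $\int_0^t c(u(t-s))(\ln s)^p s^{-\alpha(u(t-s))}\,P(t-s)\,\mathrm ds$, where $P$ is a polynomial in $u',\dots,u^{(k+i+1)}$. The second kind consists of boundary values of lower derivatives of $u$ at $0$ multiplied by kernel terms at $s=t$. Since $s^{-\alpha}|\ln s|^p\le Q s^{-\alpha^*}|\ln s|^p\in L^1$, every integral term is continuous in $t$ as long as the $u$-derivatives appearing are at most of order $n$. So we count orders. At most $k+i+1\le n+1$ derivatives can appear, and the single term containing $u^{(n+1)}$ must be avoided. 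To do this, when $i=n-k$ we stop one step early and express the top-order derivative through the recursion, using the a priori bounds from Theorem~\ref{thm:zheng-results}(ii) and the integrability of $s^{-\alpha}$. Alternatively, we work with one fewer derivative on $u^{(k)}$ by keeping $u'u^{(k)}$ undifferentiated and integrating by parts.

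The main obstacle is the continuity at $t=0$ and the treatment of the boundary terms at $s=t$. These involve $\partial_x^jK(f(0),t)$, which contains $(\ln t)^p$ factors multiplied by $\alpha^{(i)}(f(0))$ with $i\le n+1$. We show that every such product vanishes by \eqref{eq:reg4}, since the coefficients only involve $\alpha^{(i)}(f(0))$ with $i\le j+1\le n$. Then $s^{-\alpha(f(0))}=1$, and no logarithmic blow-up survives. For this step we would first verify the structure on the Bell polynomial expansion. This bookkeeping, namely matching the order of $\alpha$-derivatives to the number of logarithms, is where care is needed.
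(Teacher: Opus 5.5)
Your route is essentially the paper's. You write $h_k$ in the $s$-form \eqref{hkt} and differentiate under the integral, so that boundary terms appear at $s=t$, i.e.\ at argument $x=t-s=0$ where $u(0)=f(0)$. These are killed by the smoothing condition, and continuity follows by dominated convergence with the integrable majorant $Qs^{-\alpha^*}(1+|\ln s|^{p})$. The only difference is how the boundary terms are shown to vanish. You argue exact vanishing at $x=0$: each term of the $j$th $x$-derivative of the integrand carries a factor $\alpha^{(p)}(f(0))$ with $1\le p\le j+2\le n$ whenever $j\le n-k-1$. The paper instead first proves the Taylor bound $|\alpha^{(\ell)}(u(x))|\le Qx^{n+1-\ell}$ and deduces $|\partial_x^j(s^{-A(x)}b_k(x))|\le Qs^{-\alpha^*}(1+|\ln s|^{j+1})x^{n-k-j}$. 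Your version is valid and slightly leaner.

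One correction: your derivative count is off by one. The $i$th $x$-derivative of $\partial_xK(u(x),s)\,u'(x)\,u^{(k)}(x)$ contains only $u',\dots,u^{(k+i)}$. Since $k+i\le n$ for $i\le n-k$, $u^{(n+1)}$ never appears and no special treatment of the top order is needed. Drop the ``stop one step early'' remedy altogether. As stated it would not work anyway: Theorem~\ref{thm:zheng-results}(ii) with $\alpha_0=0$ only gives $|u^{(n+1)}(t)|\le Qt^{-n}$, which is not integrable near $t=0$.
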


\begin{proof}
Set $A(t)\coloneqq\alpha(u(t))$ and $b_k(t)\coloneqq c(t)u^{(k)}(t)$, where
\[
 c(t)\coloneqq 2A'(t)-u(t)A'(t)\alpha'(u(t))\ln s
 +u(t)\alpha''(u(t))u'(t).
\]
It follows that
$h_k(t)=-\int_0^t s^{-A(t-s)}\ln s \, b_k(t-s)\dd s$.

First, for every $0\le \ell\le n$, we have
$\alpha^{(\ell)}\in C^{n+1-\ell}(\R)$. Then Taylor's formula with integral
remainder yields
\begin{align*}
 \alpha^{(\ell)}(x)
 &=\alpha^{(\ell)}(u(0))+\cdots+
   \frac{\alpha^{(n)}(u(0))}{(n-\ell)!}(x-u(0))^{n-\ell} \\
 &\quad+\frac{1}{(n-\ell)!}\int_{u(0)}^x
       (x-r)^{n-\ell}\alpha^{(n+1)}(r)\dd r =\frac{1}{(n-\ell)!}\int_{u(0)}^x
       (x-r)^{n-\ell}\alpha^{(n+1)}(r)\dd r.
\end{align*}
Let $M\coloneqq\norm{u}_{C[0,T]}$ and
$K\coloneqq\max_{\abs r\le M}\abs{\alpha^{(n+1)}(r)}<\infty$.
Then
\begin{align*}
 \abs{\alpha^{(\ell)}(u(t))}
 &\le \frac{K}{(n-\ell)!}
   \left|\int_{u(0)}^{u(t)}\abs{u(t)-r}^{n-\ell}\dd r\right|
 \le Q\abs{u(t)-u(0)}^{n-\ell+1}.
\end{align*}
Since $\abs{u(t)-u(0)}\le \norm{u'}_{C[0,T]}t$, we obtain
\begin{equation}
 \abs{\alpha^{(\ell)}(u(t))}\le Qt^{n-\ell+1}.
 \label{eq:reg1}
\end{equation}
For $\ell=n+1$, estimate \eqref{eq:reg1} also follows from
$\alpha\in C^{n+1}(\R)$.

For $q\geq1$, the Fa\`{a} di Bruno formula~\cite[Theorem~11.4]{Charalambides2002} gives
\[
 \frac{\mathrm d^q}{\mathrm dt^q}\bigl(\alpha^{(\ell)}(u(t))\bigr)
 =\sum_{i=1}^q \alpha^{(\ell+i)}(u(t))
 \mathcal{B}_{q,i}\bigl(u'(t),u''(t),\ldots,
 u^{(q-i+1)}(t)\bigr),
\]
where $\mathcal{B}_{q,i}$ denotes the $(q,i)$th partial exponential
 Bell polynomial defined by
\[
 \mathcal{B}_{q,i}(x_1,\ldots,x_{q-i+1})
 =\sum_{\substack{j_1+\cdots+j_{q-i+1}=i\\
 j_1+2j_2+\cdots+(q-i+1)j_{q-i+1}=q}}
 \frac{q!}{j_1!\cdots j_{q-i+1}!}
 \prod_{\nu=1}^{q-i+1}
 \left(\frac{x_\nu}{\nu!}\right)^{j_\nu}.
\]
Using $u\in C^n[0,T]$ and \eqref{eq:reg1}, we find that
\begin{equation}
 \left|\frac{\mathrm d^q}{\mathrm dt^q}
 \bigl(\alpha^{(\ell)}(u(t))\bigr)\right|
 \le Qt^{n+1-\ell-q},
 \label{eq:reg2}
\end{equation}
for $0\le\ell\le n+1$, $1\le q\le n$, and $\ell+q\le n+1$. For $q=0$, the estimate above follows directly from \eqref{eq:reg1}.

On the other hand, since
$s^{-A(x)}=\exp\bigl(-A(x)\ln s\bigr)$, the Fa\`{a} di Bruno formula
gives for $i\geq1$,
\begin{align*}
 \frac{\mathrm d^i}{\mathrm dx^i}\bigl(s^{-A(x)}\bigr)
 &=s^{-A(x)}\sum_{j=1}^i(-\ln s)^j
 \mathcal{B}_{i,j}\bigl(A'(x),A''(x),\ldots,
 A^{(i-j+1)}(x)\bigr).
\end{align*}
Thus, the factor
multiplying $s^{-A(x)}$ is a polynomial of degree at most $i$ in
$\ln s$. Its coefficient of $(\ln s)^j$ is a finite linear combination
of products
\[
 A^{(i_1)}(x)\cdots A^{(i_j)}(x),\qquad
 i_1+\cdots+i_j=i,\qquad i_\nu\geq1.
\]
Combining this representation with
$\abs{A^{(\ell)}(x)}\le Qx^{n+1-\ell}$, $0\le\ell\le n$, gives
for $i\geq1$,
\begin{align*}
 \left|\frac{\mathrm d^i}{\mathrm dx^i}\bigl(s^{-A(x)}\bigr)\right|
 &\le Qs^{-\alpha^*}\sum_{j=1}^i
 \abs{\ln s}^j x^{j(n+1)-i}\le Qs^{-\alpha^*}\bigl(1+\abs{\ln s}^i\bigr)x^{n+1-i}.
\end{align*}
We combine this with the case $i=0$ to get
\[
 \left|\frac{\mathrm d^i}{\mathrm dx^i}\bigl(s^{-A(x)}\bigr)\right|
 \le
 \begin{cases}
 Qs^{-\alpha^*}\bigl(1+\abs{\ln s}^i\bigr)x^{n+1-i},&i\ge1,\\
 Qs^{-\alpha^*},&i=0.
 \end{cases}
\]

Similarly, by selecting the appropriate values of $\ell$ and $q$ in
\eqref{eq:reg2} and using $u\in C^n[0,T]$, we obtain
\begin{align*}
 \left|\frac{\mathrm d^r}{\mathrm dx^r}\bigl(2A'(x)\bigr)\right|
 &\le Qx^{n-r}, \qquad 0\le r\le n,\\
 \left|\frac{\mathrm d^r}{\mathrm dx^r}
 \bigl(u(x)A'(x)\alpha'(u(x))\bigr)\right|
 &\le Qx^{2n-r},  \qquad 0\le r\le n-1,\\
 \left|\frac{\mathrm d^r}{\mathrm dx^r}
 \bigl(u(x)\alpha^{(2)}(u(x))u'(x)\bigr)\right|
 &\le Qx^{n-1-r},  \qquad 0\le r\le n-1.
\end{align*}
Hence $\abs{c^{(r)}(x)}\le Qx^{n-1-r}(1+\abs{\ln s})$ for
$0\le r\le n-1$.

Since
$
 b_k^{(\ell)}(x)=\sum_{r=0}^{\ell}\binom{\ell}{r}
 c^{(r)}(x)u^{(k+\ell-r)}(x)$,
then the preceding estimate and $u\in C^n[0,T]$ imply
\[
 \abs{b_k^{(\ell)}(x)}
 \le Q(1+\abs{\ln s})x^{n-1-\ell}
 \le Q(1+\abs{\ln s})x^{n-k-\ell},
 \quad 1\le k\le n,\quad 0\le\ell\le n-k.
\]
Therefore, we have
\begin{align}
 \Big|\frac{\mathrm d^j}{\mathrm dx^j}
      & \bigl(s^{-A(x)}b_k(x)\bigr)\Big|
 \le Q\sum_{i=0}^j
 \left|\frac{\mathrm d^i}{\mathrm dx^i}s^{-A(x)}\right|
 \abs{b_k^{(j-i)}(x)} \notag\\
 &\le Q\Bigl[s^{-\alpha^*}(1+\abs{\ln s})x^{n-k-j} \notag+
 \sum_{i=1}^j s^{-\alpha^*}
 \bigl(1+\abs{\ln s}^i\bigr)(1+\abs{\ln s})
 x^{n+1-i}x^{n-k-j+i}\Bigr] \notag\\
 &\le Qs^{-\alpha^*}\bigl(1+\abs{\ln s}^{j+1}\bigr)x^{n-k-j},
 \qquad 0\le j\le n-k.
 \label{eq:reg3}
\end{align}
For $0\le j\le n-k-1$, it follows that
$\lim_{x\to0}\frac{\mathrm d^j}{\mathrm dx^j}
\bigl(s^{-A(x)}b_k(x)\bigr)=0$.
Consequently,
\[
 h_k^{(j)}(t)=-\int_0^t \ln s\,
 \frac{\mathrm d^j}{\mathrm dt^j}
 \bigl(s^{-A(t-s)}b_k(t-s)\bigr)\dd s,
 \qquad 0\le j\le n-k.
\]
Moreover, by \eqref{eq:reg3},
\[
 \left| \ln s\,
 \frac{\mathrm d^j}{\mathrm dt^j}
 \bigl(s^{-A(t-s)}b_k(t-s)\bigr)\right|
 \le Q\abs{\ln s}s^{-\alpha^*}
 \bigl(1+\abs{\ln s}^{j+1}\bigr)
 (t-s)^{n-k-j}.
\]
The right-hand side is integrable on $(0,T)$. The dominated convergence
theorem, together with
$s^{-A(x)}b_k(x)\in C^{n-k}[0,T]$, therefore gives
$h_k^{(j)}\in C[0,T]$ for $0\le j\le n-k$. Hence
$h_k\in C^{n-k}[0,T]$.
\end{proof}

\vskip 1mm
Based on the above analysis, we derive the following theorem.
\begin{theorem}\label{thm:volterra-regularity}
Suppose that $f\in C^m[0,T]$ and $\alpha\in C^m(\R)$. If $m$th-order smoothing condition \eqref{eq:reg4} is satisfied,
then $u\in C^m[0,T]$.
\end{theorem}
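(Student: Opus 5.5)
We argue by induction on $m$, using Lemma~\ref{lem:derivative-recursion} and Lemma~\ref{lem:regularity} with $n=m-1$. The base cases $m=1$ and $m=2$ are covered by Theorem~\ref{thm:zheng-results}. Part (i) gives $u\in C[0,T]$. For $m=1$, the condition $\alpha_0=0$ together with Theorem~\ref{thm:zheng-results}(ii) gives $|u'(t)|\le Q$ on $(0,T]$, since $1-\alpha_0=1$ is the borderline index. To avoid the logarithm at this index, we instead run the argument below with $n=0$. For $m=2$ the result is exactly Theorem~\ref{thm:zheng-results}(iii).

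For the inductive step, fix $m\ge2$ and suppose the $m$th-order smoothing condition \eqref{eq:reg4} holds with $f\in C^m[0,T]$ and $\alpha\in C^m(\R)$. The $m$th-order condition implies the $(m-1)$th-order one, so the induction hypothesis yields $u\in C^{m-1}[0,T]$. We are then in the setting of Lemma~\ref{lem:derivative-recursion} with $n=m-1$. For $k=m$, the identity \eqref{eq:derivative-recursion} shows that $v\coloneqq u^{(m)}$, which lies in $C(0,T]$ by Theorem~\ref{thm:zheng-results}(ii), satisfies the linear Volterra equation
\[
 v(t)=f_m(t)+\int_0^t\frac{u(t-s)\alpha'(u(t-s))\ln s-1}{s^{\alpha(u(t-s))}}\,v(t-s)\dd s .
\]
The plan has three steps. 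First, show that $f_m\in C[0,T]$. Unwinding the recursion gives
\[
 f_m=f^{(m)}-\sum_{k=1}^{m-1}h_k^{(m-1-k)}-\text{constants},
\]
and Lemma~\ref{lem:regularity} with $n=m-1$ gives $h_k\in C^{m-1-k}[0,T]$. Hence each $h_k^{(m-1-k)}$ is continuous, and so is $f_m$.

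Second, bound the kernel. Since $|u(t-s)\alpha'(u(t-s))|\le Q$ and $s^{-\alpha(\cdot)}\le Q s^{-\alpha^*}$ for $s\le T$ (taking $T\ge1$ without loss), the kernel is dominated by $Q s^{-\alpha^*}(1+|\ln s|)\le Q s^{-\beta}$ for some fixed $\beta\in(\alpha^*,1)$. We have $v\in L^1(0,T)$ because Theorem~\ref{thm:zheng-results}(ii) gives $|u^{(m)}(t)|\le Qt^{1-m}$, or with the $C^{m,\alpha_0}$ bound at worst $t^{1-\alpha_0-m}$. If $v$ is not integrable, we replace $v$ by $t^{m-1}v$ or work on $[\varepsilon,T]$. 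Changing variables $s\mapsto t-s$ gives
\[
 |v(t)|\le \|f_m\|+Q\int_0^t (t-s)^{-\beta}|v(s)|\dd s ,
\]
and Lemma~\ref{lem:gronwall} then yields $|v|\le Q$ on $(0,T]$.

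Third, upgrade boundedness to continuity on $[0,T]$. Since $v$ is bounded and the kernel is $L^1$-continuous in $t$, the integral term is continuous on $[0,T]$ and tends to $0$ as $t\to0^+$. Therefore $v=f_m+(\text{continuous})$ extends continuously to $t=0$, which gives $u\in C^m[0,T]$.

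I expect the integrability issue in the second step to be the main obstacle. A priori, Theorem~\ref{thm:zheng-results}(ii) only gives $|u^{(m)}(t)|\le Qt^{1-m}$, which is not in $L^1$ for $m\ge2$, so Lemma~\ref{lem:gronwall} cannot be applied directly. I would handle this with a weighted argument on $w(t)=t^{m-1}|v(t)|$, or more simply by truncation. In the truncation approach, set $v_\varepsilon=\max_{[\varepsilon,T]}|v|$ and apply the Gronwall argument on $[\varepsilon,T]$. The contribution from $(0,\varepsilon)$ must be controlled, and I would control it by noting that $\int_{t-\varepsilon}^{t}s^{-\beta}|v(t-s)|\dd s$ involves $v$ near $0$. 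Alternatively, one could first bootstrap: apply the same argument with $k=m-1$ to confirm bounded lower derivatives, then use the $C^{m,\alpha_0}$ bound $t^{1-m}$ combined with the small factor $|\alpha^{(\ell)}(u)|\le Qt^{m-\ell}$ from \eqref{eq:reg1} to show integrability. Making this step rigorous is the delicate point; the rest is a direct consequence of Lemmas~\ref{lem:derivative-recursion} and~\ref{lem:regularity}.
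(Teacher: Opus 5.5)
Your outline (induction on $m$, continuity of $f_m$ from Lemma~\ref{lem:regularity}, the $Qs^{-\beta}$ kernel bound, Lemma~\ref{lem:gronwall}, and the limit $t\to0^+$) is the same as the paper's. It has a genuine gap where you suspect, in the second step. Lemma~\ref{lem:gronwall} needs $v=u^{(m)}\in L^1(0,T)$. The only a priori information, $|u^{(m)}(t)|\le Qt^{1-m}$ from Theorem~\ref{thm:zheng-results}(ii), is not integrable for $m\ge2$, so your sentence ``We have $v\in L^1(0,T)$ because \dots'' is false.

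None of your proposed remedies repairs this. The inequality $|v(t)|\le\|f_m\|+Q\int_0^t(t-s)^{-\beta}|v(s)|\,\mathrm ds$ says nothing unless its right-hand side is finite, and that is equivalent to $v$ being integrable near $0$. Even the identity \eqref{eq:derivative-recursion} for $k=m$ presupposes this: since $\alpha(u(0))=\alpha'(u(0))=0$, its kernel tends to $-1$ as $s\to t^-$. Multiplying by $t^{m-1}$ produces the divergent factor $\int_0^t(t-s)^{-\beta}s^{1-m}\,\mathrm ds$. Working on $[\varepsilon,T]$ leaves the unknown $\int_0^\varepsilon|v|$ on the right-hand side. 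The small factors from \eqref{eq:reg1} appear only in front of $\ln s$ and in the $h_k$, not in the leading part $-s^{-\alpha(u(t-s))}$ of the kernel. A smaller slip: for $m=1$, Theorem~\ref{thm:zheng-results}(ii) gives $Q(1+|\ln t|)$, not $Q$. Your fallback with $n=0$ still works there, because the logarithm is integrable.

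The paper treats this step by multiplying \eqref{un1} by $e^{-\lambda t}$, integrating over $(0,T)$, bounding the convolution by $\Gamma(1-\gamma)\lambda^{\gamma-1}$ times the weighted integral, and absorbing for large $\lambda$. You should not simply borrow that argument. Passing from $I\le a+\tfrac12 I$ to $I\le 2a$ requires $I=\int_0^T e^{-\lambda t}|u^{(m)}(t)|\,\mathrm dt<\infty$, which is exactly the integrability in question.

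A route that does work is to take difference quotients only of what is already known to be continuous.
Let $w=u^{(m-1)}\in C[0,T]$ and $D_\delta w(t)=(w(t+\delta)-w(t))/\delta$ on $[0,T-\delta]$. Subtract \eqref{eq:derivative-recursion} with $k=m-1$ at $t+\delta$ and at $t$. This step uses:
$f_{m-1}\in C^1[0,T]$;
$\alpha\in C^2$;
$u\in C^1[0,T]$;
$|\alpha(u(\theta))|+|\alpha'(u(\theta))|\le Q\theta$.
The last bound keeps the extra piece $\delta^{-1}\int_t^{t+\delta}(\cdots)\,\mathrm ds$ bounded uniformly in $\delta$. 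The result is $|D_\delta w(t)|\le Q+Q\int_0^t s^{-\beta}|D_\delta w(t-s)|\,\mathrm ds$ with $Q$ independent of $\delta$.

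Since $D_\delta w$ is continuous, Lemma~\ref{lem:gronwall} now applies legitimately and gives $|D_\delta w|\le Q$. Hence $w$ is Lipschitz and $|u^{(m)}|\le Q$ on $(0,T]$. The identity for $k=m$ is then justified by dominated convergence, and your third step completes the proof.
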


\begin{proof}
We proceed by induction on $m$. The case $m=1$ was established in
\cite{Zheng}. Assume that the assertion holds for $m\leq n$ and consider the
case $m=n+1$.
Let $f\in C^{n+1}[0,T]$ and $\alpha\in C^{n+1}(\R)$, and suppose that the
$(n+1)$th-order smoothing condition holds. Then $u\in C^n[0,T]$ by the induction hypothesis, whereas
Theorem~\ref{thm:zheng-results}(ii) yields $u\in C^{n+1}(0,T]$.

We first show that
$f_k\in C^{n+1-k}[0,T]$ by an inner induction. For $k=1$, the assumption
$f\in C^{n+1}[0,T]$ implies that
$f_1=f'-u(0)\in C^n[0,T]$. Suppose that
$f_k\in C^{n+1-k}[0,T]$. Then
$f_k'\in C^{n-k}[0,T]$, while Lemma~\ref{lem:regularity} gives
$h_k\in C^{n-k}[0,T]$. Consequently,
$f_{k+1}=f_k'-u^{(k)}(0)-h_k\in C^{n-k}[0,T]$.
This completes the inner induction. In particular, $f_n\in C^1[0,T]$, and
hence
$f_{n+1}=f_n'-u^{(n)}(0)-h_n\in C[0,T]$.

Now we intend to bound
$u^{(n+1)}$.
 The kernel in
\eqref{eq:derivative-recursion} satisfies
\[
\left|\frac{u(t-s)\alpha'(u(t-s))\ln s-1}
 {s^{\alpha(u(t-s))}}\right|\le Qs^{-\gamma},~~\gamma:=(\alpha^*+1)/2<1.
\]
It follows from~\eqref{eq:derivative-recursion} that
\begin{align}\label{un1}
  |u^{(n+1)}(t)|
 \le \|f_{n+1}\| +Q\int_0^t s^{-\gamma} |u^{(n+1)}(t-s)|\dd s.
\end{align}

In order to apply Lemma~\ref{lem:gronwall} to \eqref{un1}, we need to show $u^{(n+1)}(t)\in L^1(0,T)$.
Multiplying both sides of \eqref{un1} by $e^{-\lambda t}$ for some $\lambda>0$, we obtain
\[
e^{-\lambda t}|u^{(n+1)}(t)|
\leq
\|f_{n+1}\|e^{-\lambda t}
+
Q\int_0^t
s^{-\gamma}e^{-\lambda s}
e^{-\lambda(t-s)}
|u^{(n+1)}(t-s)|\,ds.
\]
Integrating over $t\in(0,T)$ gives
\[
\begin{aligned}
\int_0^T e^{-\lambda t}|u^{(n+1)}(t)|\,dt
&\leq
\|f_{n+1}\|\int_0^T e^{-\lambda t}\,dt \\
&\quad+
Q\int_0^T\int_0^t
s^{-\gamma}e^{-\lambda s}
e^{-\lambda(t-s)}
|u^{(n+1)}(t-s)|\,ds\,dt,
\end{aligned}
\]
in which the double integral can be estimated as
\[
\begin{aligned}
\int_0^T\int_0^t
& s^{-\gamma}e^{-\lambda s}
e^{-\lambda(t-s)}
|u^{(n+1)}(t-s)|\,ds\,dt \\
&\qquad=
\int_0^T s^{-\gamma}e^{-\lambda s}
\int_0^{T-s}
e^{-\lambda \theta}|u^{(n+1)}(\theta)|\,d\theta\,ds \\
&\qquad\leq
\left(
\int_0^T s^{-\gamma}e^{-\lambda s}\,ds
\right)
\left(
\int_0^T e^{-\lambda \theta}|u^{(n+1)}(\theta)|\,d\theta
\right).
\end{aligned}
\]
Since
\[
\int_0^T s^{-\gamma}e^{-\lambda s}\,ds
\leq
\int_0^\infty s^{-\gamma}e^{-\lambda s}\,ds
=
\Gamma(1-\gamma)\lambda^{\gamma-1},
\]
it follows that
\[
\int_0^T e^{-\lambda t}|u^{(n+1)}(t)|\,dt
\leq
\frac{\|f_{n+1}\|}{\lambda}
+
Q\Gamma(1-\gamma)\lambda^{\gamma-1}
\int_0^T e^{-\lambda t}|u^{(n+1)}(t)|\,dt.
\]
Choose $\lambda$ sufficiently large such that $Q\Gamma(1-\gamma)\lambda^{\gamma-1} \leq \frac12$, then
\[
   e^{-\lambda T} \int_0^T |u^{(n+1)}(t)|\,dt \leq \int_0^T e^{-\lambda t}|u^{(n+1)}(t)|\,dt \leq \frac{2\|f_{n+1}\|}{\lambda}.
\]
which yields $\int_0^T |u^{(n+1)}(t)|\,dt \leq 2\lambda^{-1}e^{\lambda T} \|f_{n+1}\| $.
 Thus, $u^{(n+1)}(t)\in L^1(0,T)$ and an application of Lemma~\ref{lem:gronwall} to \eqref{un1} gives $|u^{(n+1)}\|_{L^\infty(0,T)}\le Q \|f_{n+1}\|$.

Finally, letting $t\to0^+$ in~\eqref{eq:derivative-recursion} and using dominated convergence theorem, we obtain $\lim_{t\to0^+}u^{(n+1)}(t)=f_{n+1}(0).$ Finally, it follows from $u\in C^n[0,T]\cap C^{n+1}(0,T]$ and the mean value theorem that $u^{(n+1)}$ extends continuously to $t=0$, and hence $u\in C^{n+1}[0,T]$. This completes the induction.
\end{proof}

\section{High-order collocation scheme}
\label{sec:collocation-existence}

In this section, we  propose the high-order collocation scheme and prove its well-posedness.
Let $N$ be a positive integer and introduce the partition
$0=t_0<t_1<\cdots<t_N=T$, where $t_j\coloneqq(j\tau)^r$,
$\tau\coloneqq T^{1/r}/N$, and $r\ge1$. Set
$\tau_j\coloneqq t_j-t_{j-1}$, $j=1,2,\ldots,N$, and note that
$\tau'\coloneqq\max_{1\le j\le N}\tau_j\le Tr/N\le Q\tau$. Let
$I_h\coloneqq\{t_n:n=0,1,\ldots,N\}$ and define
\[
 S_{m-1}^{(-1)}(I_h)\coloneqq
 \left\{v:v|_{(t_n,t_{n+1}]}\in\pi_{m-1},\ 0\le n\le N-1\right\},
\]
where $\pi_{m-1}$ denotes the space of polynomials of degree at most
$m-1$. We seek an approximation $U(t)\in S_{m-1}^{(-1)}(I_h)$ to $u(t)$.
For prescribed parameters $0<c_1<c_2<\cdots<c_m\le1$, let
$t_{n,i}\coloneqq t_{n-1}+c_i\tau_n$ and set
$X_h\coloneqq\{t_{n,i}:n=1,\ldots,N,\ i=1,\ldots,m\}$. The collocation
equations read
\begin{equation}\label{vie01}
 U(t)+\int_0^t\frac{U(s)}{(t-s)^{\alpha(U(s))}}\dd s=f(t),
 \quad t\in X_h.
\end{equation}

\begin{remark}
For $s\in(0,1]$,
let $\ell_1,\ldots,\ell_m$ be the Lagrange basis functions associated
with the nodes $c_1,\ldots,c_m$, namely
\[
 \ell_j(s)\coloneqq\prod_{\substack{1\le k\le m\\k\ne j}}
 \frac{s-c_k}{c_j-c_k},\qquad 1\le j\le m.
\]
The collocation solution therefore admits the representation
\begin{equation}\label{CollMethod1}
 U(t_{n-1}+s\tau_n)
 =\sum_{j=1}^m\ell_j(s)U(t_{n,j}).
\end{equation}
For $1\le n\le N$, write
$z_n\coloneqq\max_{1\le i\le m}\abs{U(t_{n,i})}$. With
\vspace{-1mm}
\begin{align*}
  L_{m-1}\coloneqq\sup_{0\le s\le1}\sum_{j=1}^m
\abs{\ell_j(s)}<\infty,
\end{align*}
then equation~\eqref{CollMethod1} yields
\begin{equation}\label{eq:coll1}
 \abs{U(t_{n-1}+s\tau_n)}\le L_{m-1}z_n.
\end{equation}
\end{remark}

\vspace{-6mm}
\subsection{A priori bound}

We deduce an a priori bound for the numerical solution by the following lemma.

\begin{lemma}\label{lem:collocation-stability}
Let $d\in\{1,2,\ldots,N\}$ and suppose that
\[
 \tau_n\le\tau^*\coloneqq
 \left(\frac{1-\alpha^*}{2\max\{1,T^{\alpha^*}\}L_{m-1}}
 \right)^{\!1/(1-\alpha^*)},
 \qquad n=1,\ldots,d.
\]
If $U$ has been defined on $[0,t_d]$, then
\[
 \max_{0\le t\le t_d}\abs{U(t)}\le C_0\norm{f},
\]
where $C_0$ is independent of $n$ and $N$.
\end{lemma}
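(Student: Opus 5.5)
I plan to evaluate the collocation equations \eqref{vie01} at each node $t=t_{n,i}$ with $n\le d$ and bound $z_n=\max_i\abs{U(t_{n,i})}$ in terms of $\norm{f}$ and the earlier values $z_1,\ldots,z_{n-1}$. The $n$th-interval part of the memory term will be absorbed into the left-hand side using $\tau_n\le\tau^*$. The history part will then be handled by a discrete weakly singular Gronwall argument of the type in Lemma~\ref{lem:discrete-gronwall}. Finally, \eqref{eq:coll1} converts the bound on $z_n$ into a bound on $\max_{0\le t\le t_d}\abs{U(t)}$.

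\emph{Kernel bound.} Since $0\le\alpha\le\alpha^*$ by Assumption~\ref{ass:nonlinear-exponent}(i), for $0<t-s\le T$ we have
\[
(t-s)^{-\alpha(U(s))}\le \max\{1,T^{\alpha^*}\}(t-s)^{-\alpha^*}=:\kappa(t-s)^{-\alpha^*}.
\]
This holds whatever the value of $U(s)$: if $t-s\le1$ the left side is at most $(t-s)^{-\alpha^*}$, and if $t-s>1$ it is at most $1\le T^{\alpha^*}(t-s)^{-\alpha^*}$. The nonlinearity therefore never enters the estimate, which is the key simplification.

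\emph{Nodal inequality.} From \eqref{vie01} at $t_{n,i}$,
\[
\abs{U(t_{n,i})}\le\norm{f}+\kappa\sum_{k=1}^{n-1}\int_{t_{k-1}}^{t_k}\frac{\abs{U(s)}}{(t_{n,i}-s)^{\alpha^*}}\dd s+\kappa\int_{t_{n-1}}^{t_{n,i}}\frac{\abs{U(s)}}{(t_{n,i}-s)^{\alpha^*}}\dd s .
\]
By \eqref{eq:coll1}, the last term is at most
\[
\kappa L_{m-1}z_n\frac{(c_i\tau_n)^{1-\alpha^*}}{1-\alpha^*}\le\kappa L_{m-1}z_n\frac{\tau_n^{1-\alpha^*}}{1-\alpha^*}\le\tfrac12 z_n ,
\]
where the last inequality is exactly the definition of $\tau^*$. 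Taking the maximum over $i$ gives
\[
z_n\le2\norm{f}+2\kappa L_{m-1}\int_0^{t_{n-1}}(t_{n,i^*}-s)^{-\alpha^*}\,\zeta(s)\dd s ,
\]
where $\zeta$ is the piecewise constant function equal to $z_k$ on $(t_{k-1},t_k]$ and $i^*$ is the index attaining the maximum.

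\emph{Closing the recursion (main obstacle).} Lemma~\ref{lem:discrete-gronwall} requires uniform weights $N^{-(1-\nu)}(n-\ell)^{-\nu}$, but on a graded mesh the weights $\int_{t_{k-1}}^{t_k}(t_{n,i}-s)^{-\alpha^*}\dd s$ are not of that form. To avoid this, I would use a continuous-variable argument instead of the discrete lemma directly. Define the nondecreasing envelope $w(t):=\max\{z_k:t_{k-1}<t\}$, i.e.\ $w(t)=\max_{k\le n}z_k$ for $t\in(t_{n-1},t_n]$. Since $t_{n,i^*}-s\ge t-s$ for $t\in(t_{n-1},t_{n,i^*}]$ is not quite available for all $t$ in the interval, I would instead use $t_{n,i^*}-s\ge t_{n-1}-s$ for $s<t_{n-1}$, together with monotonicity of $w$, to obtain, for every $t\in(t_{n-1},t_n]$ with $n\le d$,
\[
w(t)\le2\norm{f}+2\kappa L_{m-1}\int_0^{t}(t-s)^{-\alpha^*}w(s)\dd s .
\]
Obtaining this inequality may need a small shifting argument. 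The safe route is to bound each $z_n$ via the integral up to $t_{n-1}$, and then apply the same bound to every earlier $z_k$ so that the maximum is preserved. Once the inequality holds, Lemma~\ref{lem:gronwall} with $\beta=1-\alpha^*$ yields
\[
w(t)\le2\norm{f}\,E_{1-\alpha^*,1}\bigl(2\kappa L_{m-1}\Gamma(1-\alpha^*)T^{1-\alpha^*}\bigr).
\]
Consequently
\[
\max_{t\le t_d}\abs{U}\le L_{m-1}\max_n z_n\le C_0\norm{f},
\]
with $C_0$ depending only on $\alpha^*$, $T$ and $m$, and not on $n$ or $N$.

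If the envelope step proves delicate on graded meshes, the fallback is to prove $z_n\le C_0'\norm{f}$ by strong induction on $n$. In the inductive step, one bounds the history integral by $\kappa\sup_{k<n}z_k\int_0^{t_{n-1}}(t_{n,i}-s)^{-\alpha^*}\dd s$ and uses a weighted norm $\max_k e^{-\lambda t_k}z_k$ with $\lambda$ large. This mirrors the exponential-weight trick used in the proof of Theorem~\ref{thm:volterra-regularity}, which makes the history contribution at most $\tfrac12$ of the weighted maximum.
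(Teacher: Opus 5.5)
Your argument is correct, and it closes the recursion by a genuinely different route from the paper.

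\textbf{What is shared.} The kernel bound, the nodal inequality, and the absorption of the current-interval term via $\tau_n\le\tau^*$ are the same as in the paper.

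\textbf{The paper's route.} From there the paper stays discrete. It uses $t_{n,i}-s\ge c_1(t_n-s)$ to replace $t_{n,i}$ by $t_n$, paying a factor $c_1^{-\alpha^*}$, and evaluates the interval integrals exactly. It then exploits the specific grading $t_j=T(j/N)^r$, through $t_n-t_j\ge rT(j/N)^{r-1}(n-j)/N$, $\tau_j\le rTj^{r-1}/N^r$ and the mean value theorem, to show the weights are bounded by $QN^{-(1-\alpha^*)}(n-j)^{-\alpha^*}$. So Lemma~\ref{lem:discrete-gronwall} does apply on this graded mesh. Your remark that it cannot is not accurate: the weights only need to be dominated by that form, not equal to it.

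\textbf{Your route.} You instead use the constant-free comparison $t_{k,i}-s\ge t_{k-1}-s$, the nondecreasing envelope $w$, and a shift. Write that shift out, since it is the crux you left as ``a small shifting argument''. For $t\ge t_{k-1}$ and $\delta\coloneqq t-t_{k-1}$,
\[
\int_0^{t_{k-1}}\frac{w(s)}{(t_{k-1}-s)^{\alpha^*}}\dd s=\int_\delta^{t}\frac{w(\sigma-\delta)}{(t-\sigma)^{\alpha^*}}\dd\sigma\le\int_0^{t}\frac{w(\sigma)}{(t-\sigma)^{\alpha^*}}\dd\sigma,
\]
which uses $w\ge0$ and the monotonicity of $w$. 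Apply this to every $z_k$ with $k\le n$, together with $\zeta\le w$. That gives your continuous Volterra inequality for $w$ on $(t_{n-1},t_n]$. Lemma~\ref{lem:gronwall}, applied on $(0,t_d]$ with $w$ a bounded step function and hence in $L^1$, finishes the proof.

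\textbf{Comparison.} Your route never uses the mesh structure. It therefore holds for any partition with $\tau_n\le\tau^*$, and $C_0$ depends only on $\alpha^*$, $T$ and $L_{m-1}$. The paper's constant also depends on $r$ and $c_1$, and its argument is tied to $t_j=T(j/N)^r$. In exchange, the paper avoids the envelope and shift construction, and it produces a discrete convolution bound that it reuses directly in Lemma~\ref{lem:error-propagation}.

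Your fallback is unnecessary. If you ever need it, weight by $e^{-\lambda t_{k-1}}z_k$ rather than $e^{-\lambda t_k}z_k$. Then $e^{-\lambda s}z_k\le e^{-\lambda t_{k-1}}z_k$ on $(t_{k-1},t_k]$ without a stray factor $e^{\lambda\tau_k}$.
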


\begin{proof}
For $n\in\{1,\ldots,d\}$ and $1\le i\le m$, equation \eqref{vie01} gives
\begin{align*}
 U(t_{n,i})
 &=f(t_{n,i})-\int_0^{t_{n,i}}
 \frac{U(s)}{(t_{n,i}-s)^{\alpha(U(s))}}\dd s \\
 &=f(t_{n,i})-\sum_{j=1}^{n-1}\int_{t_{j-1}}^{t_j}
 \frac{U(s)}{(t_{n,i}-s)^{\alpha(U(s))}}\dd s
 -\int_{t_{n-1}}^{t_{n,i}}
 \frac{U(s)}{(t_{n,i}-s)^{\alpha(U(s))}}\dd s.
\end{align*}
When $n=1$, the sum over $j$ is understood to be zero.
Using \eqref{eq:coll1}, we obtain
\begingroup
\setlength{\jot}{0pt}
\begin{align}
 \abs{U(t_{n,i})}
 &\le\norm f+
 \sum_{j=1}^{n-1}\int_{t_{j-1}}^{t_j}
 \frac{\abs{U(s)}}{(t_{n,i}-s)^{\alpha(U(s))}}\dd s
 +\int_{t_{n-1}}^{t_{n,i}}
 \frac{\abs{U(s)}}{(t_{n,i}-s)^{\alpha(U(s))}}\dd s \notag\\
 &\le\norm f+L_{m-1}\max\{1,T^{\alpha^*}\}\Biggl[
 \sum_{j=1}^{n-1}z_j
 \int_{t_{j-1}}^{t_j}\frac{\dd s}{(t_{n,i}-s)^{\alpha^*}}\notag+z_n\int_{t_{n-1}}^{t_{n,i}}
 \frac{\dd s}{(t_{n,i}-s)^{\alpha^*}}\Biggr]\notag\\
 &\le\norm f+L_{m-1}\max\{1,T^{\alpha^*}\}\Biggl[
 \sum_{j=1}^{n-1}z_j\int_{t_{j-1}}^{t_j}
 \frac{\dd s}{(t_{n,i}-s)^{\alpha^*}}
 +\frac{\tau_n^{1-\alpha^*}}{1-\alpha^*}z_n\Biggr].
 \label{eq:coll2}
\end{align}
\endgroup
Here and below, $Q$ is independent of $n$ and $N$. For
$1\le j\le n-1$ and $s\in[t_{j-1},t_j]$,
\[
 \frac{t_n-s}{t_{n,i}-s}
 =1+\frac{(1-c_i)\tau_n}{t_{n,i}-s}
 \le1+\frac{(1-c_i)\tau_n}{t_{n,i}-t_{n-1}}
 =1+\frac{1-c_i}{c_i}=\frac1{c_i}\le\frac1{c_1}.
\]
Thus $1/(t_{n,i}-s)^{\alpha^*}\le
c_1^{-\alpha^*}/(t_n-s)^{\alpha^*}$. Substituting this inequality into
\eqref{eq:coll2} and using
$c_1^{-\alpha^*}>1$, we find
\[
 \abs{U(t_{n,i})}\le\norm f+L_{m-1}\max\{1,T^{\alpha^*}\}
 \left[\sum_{j=1}^{n-1}\frac{z_j}{c_1^{\alpha^*}}
 \int_{t_{j-1}}^{t_j}\frac{\dd s}{(t_n-s)^{\alpha^*}}
 +\frac{\tau_n^{1-\alpha^*}}{1-\alpha^*}z_n\right].
\]
This bound holds for every $1\le i\le m$ and its right-hand side is
independent of $i$. Therefore,
\[
 z_n\le\norm f+\frac12z_n+
 \frac{L_{m-1}\max\{1,T^{\alpha^*}\}}
 {(1-\alpha^*)c_1^{\alpha^*}}
 \sum_{j=1}^{n-1}z_j
 \left[(t_n-t_{j-1})^{1-\alpha^*}
 -(t_n-t_j)^{1-\alpha^*}\right].
\]
After rearrangement,
\begin{equation}
 z_n\le Q\sum_{j=1}^{n-1}
 \left[(t_n-t_{j-1})^{1-\alpha^*}
 -(t_n-t_j)^{1-\alpha^*}\right]z_j+Q\norm f.
 \label{eq:coll3}
\end{equation}
For $1\le j\le n-1$, one has
\[
 t_n-t_j=T\left(\frac nN\right)^r-T\left(\frac jN\right)^r
 \ge rT\left(\frac jN\right)^{r-1}\frac{n-j}{N},
 \quad
 t_j-t_{j-1}=\tau_j\le\frac{rTj^{r-1}}{N^r}.
\]
Hence
\begin{align*}
 &(t_n-t_{j-1})^{1-\alpha^*}-(t_n-t_j)^{1-\alpha^*}\le(1-\alpha^*)(t_j-t_{j-1})(t_n-t_j)^{-\alpha^*}\\
 &\quad\le(1-\alpha^*)\frac{rTj^{r-1}}{N^r}
 \left[rT\left(\frac jN\right)^{r-1}
 \frac{n-j}{N}\right]^{-\alpha^*}\\
 &\quad=(1-\alpha^*)(rT)^{1-\alpha^*}
 \left(\frac jN\right)^{(r-1)(1-\alpha^*)}
 \frac{N^{-(1-\alpha^*)}}{(n-j)^{\alpha^*}}
 \le\frac{QN^{-(1-\alpha^*)}}{(n-j)^{\alpha^*}}.
\end{align*}
Substitution into \eqref{eq:coll3} gives
\(
 z_n\le QN^{-(1-\alpha^*)}\sum_{j=1}^{n-1}
 (n-j)^{-\alpha^*}z_j+Q\norm f.
\)
Using Lemma \ref{lem:discrete-gronwall} now yields $z_n\le Q\norm f$,
$1\le n\le d$. Finally, \eqref{eq:coll1} gives
$\max_{0\le t\le t_d}\abs{U(t)}\le C_0\norm f$.
\end{proof}

\subsection{Existence and uniqueness}

Define $\nu^-\coloneqq(1-\alpha^*)/2$ and
$\nu^+\coloneqq(1+\alpha^*)/2$, and let
\begin{align*}
 M_0&\coloneqq\norm f+
 \frac{\max\{1,T\}C_0\norm f\,T^{1-\alpha^*}}
 {c_1^{\alpha^*}(1-\alpha^*)},~
 M_1\coloneqq M_0+C_0\norm f+1,\\
 M_2&\coloneqq
 \frac{2M_1Q_{L_{m-1}M_1}L_{m-1}
 \max\{T^{\nu^-}\abs{\ln T},(\nu^-e)^{-1}\}}{\nu^-}.
\end{align*}
Let $M_3$ be the positive root of
$\frac{L_{m-1}}{1-\alpha^*}x^2+\frac{M_2}{2}x=\frac12$; that is,
\vspace{-1mm}
\[
 M_3=\frac{1-\alpha^*}{4L_{m-1}}
 \left(\sqrt{M_2^2+\frac{8L_{m-1}}{1-\alpha^*}}-M_2\right).
\]
Finally, set
\[
 \tau_{\max}\coloneqq\min\left\{
 M_3^{1/\nu^-},
 \left(\frac{1-\alpha^*}{M_1\max\{1,T\}L_{m-1}}\right)^{1/(1-\alpha^*)},
 \tau^*\right\}.
\]

\begin{theorem}\label{thm:existence}
If $\tau_n\le\tau_{\max}$ for $n=1,\ldots,N$, then the collocation
solution $U\in S_{m-1}^{(-1)}(I_h)$ exists and is unique.
\end{theorem}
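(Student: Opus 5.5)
We argue step by step in $n$. Suppose $U$ has already been determined on $[0,t_{n-1}]$; by Lemma~\ref{lem:collocation-stability} (using $\tau_j\le\tau^*$) we have $\max_{0\le t\le t_{n-1}}\abs{U(t)}\le C_0\norm f$. On $(t_{n-1},t_n]$ the unknowns are the vector $\mathbf U=(U_1,\ldots,U_m)\in\R^m$ with $U_j=U(t_{n,j})$, and by \eqref{CollMethod1} the local polynomial is $U_{\mathbf U}(t_{n-1}+s\tau_n)=\sum_j\ell_j(s)U_j$. The collocation equations \eqref{vie01} at $t_{n,i}$ become the fixed-point problem $\mathbf U=\Phi(\mathbf U)$, where
\[
 \Phi_i(\mathbf U)\coloneqq f(t_{n,i})-\int_0^{t_{n-1}}\frac{U(s)}{(t_{n,i}-s)^{\alpha(U(s))}}\dd s-\int_{t_{n-1}}^{t_{n,i}}\frac{U_{\mathbf U}(s)}{(t_{n,i}-s)^{\alpha(U_{\mathbf U}(s))}}\dd s .
\]
The plan is to apply the Banach contraction principle on the closed set $B\coloneqq\{\mathbf U:\max_i\abs{U_i}\le M_1\}$ with the max-norm. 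Existence and uniqueness on each interval then follow, and induction on $n$ gives the theorem. Note that any solution of the full collocation system on $[0,t_n]$ is bounded by $C_0\norm f<M_1$, again by Lemma~\ref{lem:collocation-stability}. So uniqueness in $B$ gives uniqueness among all solutions.

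\textbf{Self-mapping.} The history term is bounded as in the proof of Lemma~\ref{lem:collocation-stability}. We use $(t_{n,i}-s)^{-\alpha}\le \max\{1,T\}(t_{n,i}-s)^{-\alpha^*}$ and the ratio bound $c_1^{-\alpha^*}$, and then integrate, which gives at most $M_0-\norm f$. Hence $\abs{f(t_{n,i})}+(\text{history})\le M_0$. The local term is at most $L_{m-1}M_1\max\{1,T\}\tau_n^{1-\alpha^*}/(1-\alpha^*)\le1$ by the second entry of $\tau_{\max}$. Thus $\abs{\Phi_i(\mathbf U)}\le M_0+1\le M_1$.

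\textbf{Contraction (main obstacle).} For $\mathbf U,\mathbf V\in B$, the history terms cancel and only the local integral remains. We split the integrand difference as
\[
 \frac{U_{\mathbf U}-U_{\mathbf V}}{(t_{n,i}-s)^{\alpha(U_{\mathbf U})}}+U_{\mathbf V}\Bigl[(t_{n,i}-s)^{-\alpha(U_{\mathbf U})}-(t_{n,i}-s)^{-\alpha(U_{\mathbf V})}\Bigr].
\]
The first term contributes at most $\frac{L_{m-1}}{1-\alpha^*}\tau_n^{1-\alpha^*}\max\{1,T\}\abs{\mathbf U-\mathbf V}$. The delicate part is the second term, because the exponent itself changes. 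Writing $x=t_{n,i}-s$ and applying the mean value theorem to $a\mapsto x^{-a}$ gives
\[
 \abs{x^{-a}-x^{-b}}\le\abs{\ln x}\,x^{-\max(a,b)}\abs{a-b}\le\abs{\ln x}\,x^{-\alpha^*}Q_{L_{m-1}M_1}L_{m-1}\abs{\mathbf U-\mathbf V}
\]
for $x\le1$ (and $\le T^{\alpha^*}$-type factors otherwise). Here we use Assumption~\ref{ass:nonlinear-exponent}(ii) on $\abs{\cdot}\le L_{m-1}M_1$. Next we write $x^{-\alpha^*}=x^{-\nu^+}x^{\nu^-}$ and use $\sup_{0<x\le T}x^{\nu^-}\abs{\ln x}\le\max\{T^{\nu^-}\abs{\ln T},(\nu^-e)^{-1}\}$. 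Integrating $x^{-\nu^+}$ over an interval of length at most $\tau_n$ gives $\tau_n^{\nu^-}/\nu^-$. Altogether the second term is bounded by $\frac{M_2}{2}\tau_n^{\nu^-}\abs{\mathbf U-\mathbf V}$.

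It remains to combine the two bounds. Since $\tau_n^{1-\alpha^*}=(\tau_n^{\nu^-})^2$, the Lipschitz constant of $\Phi$ is at most $\frac{L_{m-1}}{1-\alpha^*}y^2+\frac{M_2}{2}y$ with $y=\tau_n^{\nu^-}\le M_3$. By the definition of $M_3$ this is at most $1/2$. The constant factor $\max\{1,T\}$ in the first term needs care: either it is absorbed by the constants, or one bounds $(t_{n,i}-s)^{-\alpha(\cdot)}\le\tau_n^{-\alpha^*}$ directly, since $t_{n,i}-s\le\tau_n\le1$ for small $\tau_n$. I expect to adjust this bookkeeping but not the structure of the argument. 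So $\Phi$ is a contraction on $B$, which gives a unique fixed point and completes the induction step.
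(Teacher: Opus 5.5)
Your proposal follows the paper's proof essentially step by step: the same fixed-point map on $B_{M_1}(0)$, the same self-mapping bound $M_0+1\le M_1$, the same splitting of the contraction estimate into two integrals using the $\nu^{\pm}$ device and the quadratic that defines $M_3$, and the same uniqueness argument via Lemma~\ref{lem:collocation-stability}.

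For the bookkeeping you were unsure about: $\tau_n\le\tau^*<1$ (since $L_{m-1}\ge1$), so $0<t_{n,i}-s<1$ on the local interval and $(t_{n,i}-s)^{-\alpha(\cdot)}\le(t_{n,i}-s)^{-\alpha^*}$ with no $\max\{1,T\}$ factor. Your alternative bound by $\tau_n^{-\alpha^*}$ goes the wrong way. Separately, $|U_{\mathbf V}|\le L_{m-1}M_1$ costs one more factor $L_{m-1}$ in the second integral than $M_2$ records; this is a harmless slip shared with the paper and is fixed by enlarging $M_2$.
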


\begin{proof}
We argue by induction on $n$. For $n=0$, we assume $U(0)=f(0)$.
Let $n\ge1$, and suppose that $U\in S_{m-1}^{(-1)}$ is the unique solution of
\[
 U(t_{k,i})=f(t_{k,i})-\int_0^{t_{k,i}}
 \frac{U(s)}{(t_{k,i}-s)^{\alpha(U(s))}}\dd s,
 \quad k=0,1,\ldots,n-1,\quad i=1,\ldots,m,
\]
where $t_{k,i}\equiv0$ when $k=0$. We shall prove that there is a unique
function $\widetilde U$ on $(t_{n-1},t_n]$ such that
\vspace{-2mm}
\begin{align*}
 \widetilde U(t_{n,i})
 &=f(t_{n,i})-\int_0^{t_{n-1}}
 \frac{U(s)}{(t_{n,i}-s)^{\alpha(U(s))}}\dd s
 -\int_{t_{n-1}}^{t_{n,i}}
 \frac{\widetilde U(s)}{(t_{n,i}-s)^{\alpha(\widetilde U(s))}}\dd s,
\end{align*}
 for $i=1,2,\ldots,m$.

For $x=(x_1,\ldots,x_m)^\top\in\R^m$, define
$\norm x\coloneqq\max_{1\le i\le m}\abs{x_i}$ and
$B_R(0)\coloneqq\{x\in\R^m:\norm x\le R\}$. For
$x\in B_{M_1}(0)$ and $s\in[t_{n-1},t_n]$, set
$\widetilde U_n(s,x)=L(s)^\top x$, where
\vspace{-2mm}
\[
 L(s)\coloneqq\left(
 \ell_1\left(\frac{s-t_{n-1}}{\tau_n}\right),\ldots,
 \ell_m\left(\frac{s-t_{n-1}}{\tau_n}\right)
 \right)^\top\in\R^m.
\]
By~\eqref{eq:coll1}, then
$\abs{\widetilde U_n(s,x)}\le L_{m-1}\norm x$.

Define $\Delta_n:B_{M_1}(0)\to\R^m$ componentwise by
\vspace{-2mm}
\[
 (\Delta_n(x))_i\coloneqq f(t_{n,i})-
 \int_{0}^{t_{n-1}}
 \frac{U(s)}{(t_{n,i}-s)^{\alpha(U(s))}}\dd s
 -\int_{t_{n-1}}^{t_{n,i}}
 \frac{\widetilde U_n(s,x)}
 {(t_{n,i}-s)^{\alpha(\widetilde U_n(s,x))}}\dd s.
\]
By Lemma~\ref{lem:collocation-stability}, one yields
\vspace{-4mm}
\begin{align*}
 \abs{(\Delta_n(x))_i}
 &\le\norm f+\max\{1,T\}C_0\norm f
 \int_{0}^{t_{n-1}}\frac{\dd s}{(t_{n,i}-s)^{\alpha^*}}\\
 &\qquad+\max\{1,T\}L_{m-1}
 \norm x
 \int_{t_{n-1}}^{t_{n,i}}\frac{\dd s}{(t_{n,i}-s)^{\alpha^*}}\\
 &\le\norm f+\frac{\max\{1,T\}C_0\norm f}{c_1^{\alpha^*}}
 \int_{0}^{t_{n-1}}
 \frac{\dd s}{(t_n-s)^{\alpha^*}}\\
 &\quad+\frac{\max\{1,T\}L_{m-1}\norm x}{1-\alpha^*}
 (t_{n,i}-t_{n-1})^{1-\alpha^*}\\
 &\le\norm f+
 \frac{\max\{1,T\}C_0\norm f\,T^{1-\alpha^*}}
 {c_1^{\alpha^*}(1-\alpha^*)}+\frac{\max\{1,T\}L_{m-1}M_1}{1-\alpha^*}
 \tau_n^{1-\alpha^*}\\
 &=M_0+M_1\frac{\max\{1,T\}L_{m-1}}{1-\alpha^*}
 \tau_n^{1-\alpha^*}\le M_1.
\end{align*}
Thus $\Delta_n$ maps $B_{M_1}(0)$ into itself.

We next prove that $\Delta_n$ is a contraction mapping. For arbitrary
$x,y\in B_{M_1}(0)$,
\begin{align*}
 \abs{(\Delta_n(x)-\Delta_n(y))_i}
 &=\left|\int_{t_{n-1}}^{t_{n,i}}
 \left(
 \frac{\widetilde U_n(s,x)}{(t_{n,i}-s)^{\alpha(\widetilde U_n(s,x))}}
 -\frac{\widetilde U_n(s,y)}{(t_{n,i}-s)^{\alpha(\widetilde U_n(s,y))}}
 \right)\dd s\right|\\
 &\le I_1+I_2,
\end{align*}
where
\begin{align*}
 I_1&\coloneqq\int_{t_{n-1}}^{t_{n,i}}
 \frac{\abs{\widetilde U_n(s,x)-\widetilde U_n(s,y)}}
 {(t_{n,i}-s)^{\alpha(\widetilde U_n(s,x))}}\dd s,\\
 I_2&\coloneqq\int_{t_{n-1}}^{t_{n,i}}
 \abs{\widetilde U_n(s,y)}
 \left|(t_{n,i}-s)^{-\alpha(\widetilde U_n(s,x))}
 -(t_{n,i}-s)^{-\alpha(\widetilde U_n(s,y))}\right|\dd s.
\end{align*}
First,
\vspace{-2mm}
\[
 \abs{\widetilde U_n(s,x)-\widetilde U_n(s,y)}
 =\abs{L(s)^T(x-y)}\le L_{m-1}\norm{x-y}.
\]
Hence $I_1\le L_{m-1}\tau_n^{1-\alpha^*}\norm{x-y}/(1-\alpha^*)$.

For $I_2$, the mean value theorem, $x,y\in B_{M_1}(0)$, and
Assumption~\ref{ass:nonlinear-exponent} imply that there is a number $\xi$ between
$\alpha(\widetilde U_n(s,x))$ and $\alpha(\widetilde U_n(s,y))$ such that
\begin{align*}
 &\left|(t_{n,i}-s)^{-\alpha(\widetilde U_n(s,x))}
 -(t_{n,i}-s)^{-\alpha(\widetilde U_n(s,y))}\right|\\
 &\qquad=\abs{\alpha(\widetilde U_n(s,x))-\alpha(\widetilde U_n(s,y))}
 (t_{n,i}-s)^{-\xi}\abs{\ln(t_{n,i}-s)}\\
 &\qquad\le Q_{L_{m-1}M_1}\abs{\widetilde U_n(s,x)-\widetilde U_n(s,y)}
 (t_{n,i}-s)^{-\alpha^*}\abs{\ln(t_{n,i}-s)}\\
 &\qquad\le Q_{L_{m-1}M_1}L_{m-1}\norm{x-y}(t_{n,i}-s)^{-\nu^+}
 \left[(t_{n,i}-s)^{\nu^-}\abs{\ln(t_{n,i}-s)}\right]\\
 &\qquad\le Q_{L_{m-1}M_1}L_{m-1}
 \max\{T^{\nu^-}\abs{\ln T},(\nu^-e)^{-1}\}
 \norm{x-y}(t_{n,i}-s)^{-\nu^+}.
\end{align*}
Here we used $(t_{n,i}-s)^{\nu^-}\abs{\ln(t_{n,i}-s)}
\le\max\{T^{\nu^-}\abs{\ln T},(\nu^-e)^{-1}\}$.
It follows that
\begin{align*}
 I_2
 &\le M_1Q_{L_{m-1}M_1}L_{m-1}
 \max\{T^{\nu^-}\abs{\ln T},(\nu^-e)^{-1}\}\norm{x-y}
 \int_{t_{n-1}}^{t_{n,i}}(t_{n,i}-s)^{-\nu^+}\dd s\\
 &\le\frac{M_1Q_{L_{m-1}M_1}L_{m-1}
 \max\{T^{\nu^-}\abs{\ln T},(\nu^-e)^{-1}\}}{\nu^-}
 \tau_n^{\nu^-}\norm{x-y}
 =\frac{M_2}{2}\tau_n^{\nu^-}\norm{x-y}.
\end{align*}
Since $\tau_n\le\tau_{\max}\le M_3^{1/\nu^-}$,
\begin{align*}
  \abs{(\Delta_n(x)-\Delta_n(y))_i}
\le\left(\frac{L_{m-1}}{1-\alpha^*}\tau_n^{2\nu^-}
+\frac{M_2}{2}\tau_n^{\nu^-}\right)\norm{x-y}
\le\frac12\norm{x-y},
\end{align*}
which implies that $\norm{\Delta_n(x)-\Delta_n(y)}\le\frac12\norm{x-y}$.
Thus, $\Delta_n:B_{M_1}(0)\to B_{M_1}(0)$ is a contraction mapping.

Therefore, $\Delta_n$ has a unique fixed point
$x^*=(x_1^*,\ldots,x_m^*)^\top\in B_{M_1}(0)$. Defining
$\widetilde U(t_{n,i})=x_i^*$ for $i=1,\ldots,m$ uniquely determines a
function $\widetilde U\in S_{m-1}^{(-1)}$ satisfying the stated conditions.
Moreover, Lemma~\ref{lem:collocation-stability} implies that every
admissible solution satisfies
$\abs{\widetilde U(t_{n,i})}\le M_1$; hence $\widetilde U$ is unique.
This completes the induction.
\end{proof}

\section{Preliminary error estimates}\label{sec:preliminary-error}

To establish the convergence of the proposed scheme, we first present
several preliminary error estimates in this section.

Let $u^I$ denote the Lagrange interpolant of the exact solution $u$ on
$X_h$. Thus, for every $1\le n\le N$ and $s\in(0,1]$,
\[
 u^I(t_{n-1}+s\tau_n)
 =\sum_{j=1}^{m}\ell_j(s)u(t_{n,j}),
 \qquad u^I\in S_{m-1}^{(-1)}.
\]
Define $\rho(t)\coloneqq u(t)-U(t)$ and
$\rho^I(t)\coloneqq u^I(t)-U(t)$ for $t\in(0,T]$, and set
\begin{align*}
 (R_1)_{n,i}&\coloneqq\int_0^{t_{n,i}}
 \frac{u^I(s)-u(s)}{(t_{n,i}-s)^{\alpha(u(s))}}\dd s,\\
 (R_2)_{n,i}&\coloneqq\int_0^{t_{n,i}}U(s)
 \left[\frac1{(t_{n,i}-s)^{\alpha(u^I(s))}}
 -\frac1{(t_{n,i}-s)^{\alpha(u(s))}}\right]\dd s,
 \qquad n=1,2,\ldots,N.
\end{align*}
Finally, let $\rho_n\coloneqq\max_{1\le i\le m}\abs{\rho(t_{n,i})}$.
By \eqref{eq:coll1},
$\norm{\rho^I}_{(t_{n-1},t_n]}\le L_{m-1}\rho_n$.

\begin{lemma}\label{lem:error-propagation}
For all sufficiently large $N$, there is a constant $Q$ such that
\[
 \rho_n\le Q\max_{\substack{1\le d\le n\\1\le i\le m}}
 \left\{\abs{(R_1)_{d,i}}+\abs{(R_2)_{d,i}}\right\}.
\]
\end{lemma}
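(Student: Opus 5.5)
We will derive an error equation at the collocation points and close it with the discrete Gronwall inequality (Lemma~\ref{lem:discrete-gronwall}), exactly as in the proof of Lemma~\ref{lem:collocation-stability}. Since $u^I(t_{n,i})=u(t_{n,i})$, we have $\rho(t_{n,i})=\rho^I(t_{n,i})$. We subtract the collocation equation \eqref{vie01} from \eqref{vie} at $t=t_{n,i}$ and insert $u^I$ in two stages: first $u\to u^I$ in the numerator (kernel exponent $\alpha(u)$ fixed), then the exponent $\alpha(u)\to\alpha(u^I)$ (numerator $U$ fixed). This gives
\[
 \rho(t_{n,i})=-(R_1)_{n,i}-(R_2)_{n,i}-\int_0^{t_{n,i}}\frac{\rho^I(s)}{(t_{n,i}-s)^{\alpha(u(s))}}\dd s
 -\int_0^{t_{n,i}}U(s)\Bigl[(t_{n,i}-s)^{-\alpha(U(s))}-(t_{n,i}-s)^{-\alpha(u^I(s))}\Bigr]\dd s .
\]
To verify this, add the pieces: $u^I=u+(u^I-u)$ produces $R_1$, and $U(s)[\,\cdot\,]$ telescopes with $R_2$. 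The signs are immaterial, since we only take absolute values.

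Next we bound the two remaining integrals by $\rho^I$. The first is at most $\max\{1,T\}\int_0^{t_{n,i}}|\rho^I(s)|(t_{n,i}-s)^{-\alpha^*}\dd s$.

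For the second, we argue as in the $I_2$ estimate of Theorem~\ref{thm:existence}. By the mean value theorem and Assumption~\ref{ass:nonlinear-exponent}(ii), together with the uniform bounds $|U|\le C_0\|f\|$ (Lemma~\ref{lem:collocation-stability}) and $|u^I|\le L_{m-1}\|u\|$, we get
\[
|(t_{n,i}-s)^{-\alpha(U)}-(t_{n,i}-s)^{-\alpha(u^I)}|\le Q|\rho^I(s)|(t_{n,i}-s)^{-\nu^+}.
\]
Here the logarithm is absorbed using $(t-s)^{\nu^-}|\ln(t-s)|\le Q$. Hence
\[
|\rho(t_{n,i})|\le |(R_1)_{n,i}|+|(R_2)_{n,i}|+Q\int_0^{t_{n,i}}|\rho^I(s)|(t_{n,i}-s)^{-\nu^+}\dd s.
\]

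We then split the integral into the pieces over $[t_{j-1},t_j]$, $j<n$, and the local piece over $[t_{n-1},t_{n,i}]$. We use $\|\rho^I\|_{(t_{j-1},t_j]}\le L_{m-1}\rho_j$. The local piece is bounded by $Q\tau_n^{\nu^-}\rho_n/\nu^-$, which is at most $\frac12\rho_n$ once $N$ is large enough that $\tau'\le Q\tau$ is small; this is where ``sufficiently large $N$'' enters. For the history terms we use $(t_{n,i}-s)^{-\nu^+}\le c_1^{-\nu^+}(t_n-s)^{-\nu^+}$, as in Lemma~\ref{lem:collocation-stability}. Taking the maximum over $i$ and absorbing the $\frac12\rho_n$ term then yields
\[
\rho_n\le 2\max_i\{|(R_1)_{n,i}|+|(R_2)_{n,i}|\}+Q\sum_{j=1}^{n-1}\bigl[(t_n-t_{j-1})^{1-\nu^+}-(t_n-t_j)^{1-\nu^+}\bigr]\rho_j .
\]

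Finally, the graded-mesh computation from Lemma~\ref{lem:collocation-stability}, with $\alpha^*$ replaced by $\nu^+$, bounds each bracket by $QN^{-(1-\nu^+)}(n-j)^{-\nu^+}$. We apply Lemma~\ref{lem:discrete-gronwall} with $\nu=\nu^+$ and with the nondecreasing sequence
\[
\gamma_n=2\max_{d\le n,\,i}\{|(R_1)_{d,i}|+|(R_2)_{d,i}|\}.
\]
Since the Mittag--Leffler factor is bounded uniformly in $n,N$, this gives the claim.

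The main obstacle is the nonlinear exponent term. It forces a slightly weaker kernel $(t-s)^{-\nu^+}$, so the absorption step and the Gronwall lemma must be run with $\nu^+$ instead of $\alpha^*$. It also requires uniform bounds on $U$ and $u^I$, so that the local Lipschitz constant $Q_M$ can be fixed independently of $N$. That is why $N$ must be large: to ensure $\tau_n\le\tau_{\max}$, so that Lemma~\ref{lem:collocation-stability} and Theorem~\ref{thm:existence} apply.
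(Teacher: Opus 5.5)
Your proposal is correct and follows essentially the same route as the paper. Both arguments split $\rho(t_{n,i})$ into the same four terms: $(R_1)_{n,i}$, $(R_2)_{n,i}$, the $\rho^I$ term, and the exponent-difference term, which is handled by the mean value theorem with the logarithm absorbed into $(t_{n,i}-s)^{-\nu^+}$. Both then use the $c_1$-comparison on the history intervals, absorb the local $Q\tau_n^{\nu^-}\rho_n$ piece for large $N$, and close with Lemma~\ref{lem:discrete-gronwall} with $\nu=\nu^+$; your brackets coincide with the paper's because $1-\nu^+=\nu^-$.

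The only slip is that the signs in your identity are wrong: it should read $\rho(t_{n,i})=(R_1)_{n,i}+(R_2)_{n,i}-\int_0^{t_{n,i}}\rho^I(s)(t_{n,i}-s)^{-\alpha(u(s))}\dd s-\int_0^{t_{n,i}}U(s)[(t_{n,i}-s)^{-\alpha(u^I(s))}-(t_{n,i}-s)^{-\alpha(U(s))}]\dd s$. As you say, this is harmless once absolute values are taken.
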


\begin{proof}
For arbitrary $n\in\{1,2,\ldots,N\}$ and $1\le i\le m$, the exact
equation and the collocation equation give
\[
 \rho(t_{n,i})=-\int_0^{t_{n,i}}
 \left[\frac{u(s)}{(t_{n,i}-s)^{\alpha(u(s))}}
 -\frac{U(s)}{(t_{n,i}-s)^{\alpha(U(s))}}\right]\dd s.
\]
Moreover,
\begin{align*}
\frac{u(s)}{(t_{n,i}-s)^{\alpha(u(s))}}
 &-\frac{U(s)}{(t_{n,i}-s)^{\alpha(U(s))}}
 =\frac{u(s)-u^I(s)}{(t_{n,i}-s)^{\alpha(u(s))}}
 +\frac{\rho^I(s)}{(t_{n,i}-s)^{\alpha(u(s))}}\\
&+U(s)\left[\frac1{(t_{n,i}-s)^{\alpha(u(s))}}
 -\frac1{(t_{n,i}-s)^{\alpha(u^I(s))}}\right] \\
 & +U(s)\left[\frac1{(t_{n,i}-s)^{\alpha(u^I(s))}}
 -\frac1{(t_{n,i}-s)^{\alpha(U(s))}}\right].
\end{align*}
Hence
\begin{align}
 \rho(t_{n,i})
 &=-\int_0^{t_{n,i}}U(s)
 \left[\frac1{(t_{n,i}-s)^{\alpha(u^I(s))}}
 -\frac1{(t_{n,i}-s)^{\alpha(U(s))}}\right]\dd s \notag\\
 &\quad-\int_0^{t_{n,i}}
 \frac{\rho^I(s)}{(t_{n,i}-s)^{\alpha(u(s))}}\dd s
 +(R_1)_{n,i}+(R_2)_{n,i} \notag\\
 &\eqqcolon(\Phi_1)_{n,i}+(\Phi_2)_{n,i}
 +(R_1)_{n,i}+(R_2)_{n,i}.
 \label{eq:err5}
\end{align}

Applying the mean value theorem to the function
$(t_{n,i}-s)^{-\alpha(\cdot)}$ gives
\begin{align*}
 \Big|\frac1{(t_{n,i}-s)^{\alpha(u^I(s))}}
 &-\frac1{(t_{n,i}-s)^{\alpha(U(s))}}\Big|
 =\left|\frac{\ln(t_{n,i}-s)(-\alpha'(\eta(s)))\rho^I(s)}
 {(t_{n,i}-s)^{\alpha(\eta(s))}}\right| \\
 &\le Q\frac{\abs{\ln(t_{n,i}-s)}\abs{\rho^I(s)}}
 {(t_{n,i}-s)^{\alpha^*}} \le Q\frac{\abs{\rho^I(s)}}{(t_{n,i}-s)^{\nu^+}}
 \le Q\frac{\abs{\rho^I(s)}}{(t_n-s)^{\nu^+}},
\end{align*}
where $\eta(s)$ lies between $u^I(s)$ and $U(s)$. Combining this estimate
with the preceding interpolation bound and
Lemma~\ref{lem:collocation-stability} yields
\begin{align*}
 \abs{(\Phi_1)_{n,i}}
 &\le Q\sum_{j=1}^{n-1}\int_{t_{j-1}}^{t_j}
 \rho_j\frac{\dd s}{(t_n-s)^{\nu^+}}
 +Q\int_{t_{n-1}}^{t_{n,i}}
 \rho_n\frac{\dd s}{(t_{n,i}-s)^{\nu^+}}.
\end{align*}
Arguing exactly as in the proof of Lemma~\ref{lem:collocation-stability},
the right-hand side is bounded
by
\[
 Q\tau_n^{\nu^-}\rho_n+Q\sum_{j=1}^{n-1}\rho_j
 \left[(t_n-t_{j-1})^{\nu^-}-(t_n-t_{j})^{\nu^-}\right].
\]
The same argument applies to $\abs{(\Phi_2)_{n,i}}$. Taking $N$ sufficiently
large, and hence $\tau_n$ sufficiently small, combine (\ref{eq:err5}) we obtain
\begin{align*}
 \abs{\rho(t_{n,i})}
 &\le Q\tau_n^{\nu^-}\rho_n+Q\sum_{j=1}^{n-1}\rho_j
 \left[(t_n-t_{j-1})^{\nu^-}-(t_n-t_{j})^{\nu^-}\right]\\
 &\quad+\max_{1\le i\le m}
 \left\{\abs{(R_1)_{n,i}}+\abs{(R_2)_{n,i}}\right\},
 \qquad 1\le i\le m.
\end{align*}
Therefore,
\begin{align*}
 \rho_n
 &\le Q\sum_{j=1}^{n-1}\rho_j
 \left[(t_n-t_{j-1})^{\nu^-}-(t_n-t_{j})^{\nu^-}\right]
 +\max_{1\le i\le m}
 \left\{\abs{(R_1)_{n,i}}+\abs{(R_2)_{n,i}}\right\}\\
 &\le QN^{-\nu^-}\sum_{j=1}^{n-1}\rho_j(n-j)^{-\nu^+}
 +\max_{\substack{1\le d\le n\\1\le i\le m}}
 \left\{\abs{(R_1)_{d,i}}+\abs{(R_2)_{d,i}}\right\}.
\end{align*}
The conclusion follows from Lemma \ref{lem:discrete-gronwall}.
\end{proof}

Based on Lemma \ref{lem:error-propagation}, we give the following corollary.
\begin{corollary}
For all sufficiently large $N$, there exist constants $Q$ and $Q_0$ such
that, for every $n=1,\ldots,N$ and $1\le i\le m$,
\[
 \abs{(R_1)_{n,i}}+\abs{(R_2)_{n,i}}
 \le Q\int_0^{t_{n,i}}
 \frac{\abs{u(s)-u^I(s)}\bigl(\abs{\ln(t_{n,i}-s)}+1\bigr)}
 {(t_{n,i}-s)^{\alpha(u(t_{n,i}))+Q_0\tau^{1-\alpha_0}}}\dd s.
\]
\end{corollary}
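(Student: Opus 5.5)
Both $R_1$ and $R_2$ are integrals over $(0,t_{n,i})$ of $\abs{u-u^I}$ times a kernel. I plan to bound each kernel pointwise by
\[
 (\abs{\ln(t_{n,i}-s)}+1)\,(t_{n,i}-s)^{-\alpha(u(t_{n,i}))-Q_0\tau^{1-\alpha_0}}.
\]
The key tool is that the exponent $\alpha(u(s))$ (and $\alpha(\eta)$ for any $\eta$ between $u(s)$ and $u^I(s)$) differs from the frozen value $\alpha(u(t_{n,i}))$ by at most $Q_0\tau^{1-\alpha_0}$, up to a benign factor.

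\textbf{Step 1: the $R_2$ term.} Since $0\le \alpha\le \alpha^*$, the exact solution is bounded, and $U$ is bounded by Lemma~\ref{lem:collocation-stability}. The interpolant $u^I$ is also bounded, by $L_{m-1}\norm u$. Hence all arguments of $\alpha$ stay in a fixed ball, and $\alpha'$ is bounded there by Assumption~\ref{ass:nonlinear-exponent}. The mean value theorem applied to $x\mapsto (t_{n,i}-s)^{-\alpha(x)}$ gives
\[
 \bigl|(t_{n,i}-s)^{-\alpha(u^I)}-(t_{n,i}-s)^{-\alpha(u)}\bigr|
 \le Q\,\abs{\ln(t_{n,i}-s)}\,\abs{u-u^I}\,(t_{n,i}-s)^{-\alpha(\eta(s))},
\]
with $\eta(s)$ between $u(s)$ and $u^I(s)$. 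The term $R_1$ already has the form $\abs{u-u^I}(t_{n,i}-s)^{-\alpha(u(s))}$. So both reduce to controlling $(t_{n,i}-s)^{-\alpha(w(s))}$ for $w\in\{u,\eta\}$.

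\textbf{Step 2: freezing the exponent.} I first handle the case $t_{n,i}-s\ge 1$ (possible only if $T>1$). There the power is at most $\max\{1,T^{\alpha^*}\}$, so only a constant is lost and any exponent works.

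For $t_{n,i}-s<1$, the power $(t_{n,i}-s)^{-\beta}$ is increasing in $\beta$. It therefore suffices to show $\alpha(w(s))\le \alpha(u(t_{n,i}))+Q_0\tau^{1-\alpha_0}$. By Lipschitz continuity of $\alpha$,
\[
\abs{\alpha(w(s))-\alpha(u(t_{n,i}))}\le Q(\abs{u(s)-u(t_{n,i})}+\abs{u^I(s)-u(s)}).
\]
By Theorem~\ref{thm:zheng-results}(ii) with $m=1$, $u\in C^{1,\alpha_0}(0,T]$. Hence $\abs{u'(t)}\le Qt^{-\alpha_0}$, and $u$ is Hölder continuous of order $1-\alpha_0$, so $\abs{u(s)-u(t)}\le Q\abs{t-s}^{1-\alpha_0}$.

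This is where the main obstacle lies. For $s$ far from $t_{n,i}$ the difference $\abs{u(s)-u(t_{n,i})}$ is not small. The remedy is that the kernel factor $(t_{n,i}-s)^{-\delta}$ with $\delta=\abs{\alpha(u(s))-\alpha(u(t_{n,i}))}$ is bounded uniformly whenever $t_{n,i}-s\ge\tau$ or so. Indeed,
\[
(t_{n,i}-s)^{-\delta}=\exp(\delta\abs{\ln(t_{n,i}-s)}),\qquad \delta\le Q(t_{n,i}-s)^{1-\alpha_0},
\]
and $x^{1-\alpha_0}\abs{\ln x}$ is bounded. So the factor is at most a constant for every $s$, and the exponent can be frozen at $\alpha(u(t_{n,i}))$ at the cost of a constant.

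The local interpolation error satisfies $\abs{u^I-u}\le Q\tau'^{\,1-\alpha_0}\le Q\tau^{1-\alpha_0}$ on each subinterval, by Hölder continuity and the bounded Lebesgue constant. This shift is the genuine extra $Q_0\tau^{1-\alpha_0}$ in the exponent, and it is why that term appears in the stated bound.

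\textbf{Step 3: assembling.} Adding the $R_1$ and $R_2$ bounds, absorbing constants, and combining the factors $\abs{\ln(t_{n,i}-s)}$ and $1$ gives the stated estimate. Everything here is pointwise, so "sufficiently large $N$" is needed only to keep $\tau$ small: this ensures the arguments of $\alpha$ stay in the fixed ball, and that Lemma~\ref{lem:collocation-stability} and Theorem~\ref{thm:existence} apply.
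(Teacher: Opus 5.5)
Your proposal is correct and follows the same route as the paper, whose proof defers to Corollary~5.2 of Zheng, Qiu and Stynes. That route has three steps: the mean value theorem in the exponent for $R_2$; freezing $\alpha(u(s))$ (or $\alpha(\eta(s))$) at $\alpha(u(t_{n,i}))$ at the price of a constant, since $\delta\,|\ln(t_{n,i}-s)|\le Q(t_{n,i}-s)^{1-\alpha_0}|\ln(t_{n,i}-s)|$ stays bounded; and letting the local interpolation error $|u-u^I|\le Q\tau^{1-\alpha_0}$ account for the shift $Q_0\tau^{1-\alpha_0}$.

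One point to tighten, which the paper's proof also leaves implicit: when $\alpha_0=0$, membership in $C^{1,0}(0,T]$ only gives $|u'(t)|\le Q(1+|\ln t|)$, not $|u'(t)|\le Q$. The freezing step still works because $x(1+|\ln x|)|\ln x|$ is bounded, but to get $|u-u^I|\le Q\tau$ rather than $Q\tau(1+|\ln\tau|)$ you should use $|u''(t)|\le Qt^{-1}$ from $u\in C^{2,0}(0,T]$ and compare $u$ with a linear polynomial on each subinterval, which is legitimate since $m\ge2$.
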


\begin{proof}
The argument is identical to that of \cite[Corollary~5.2]{Zheng}. Indeed,
\begin{align*}
 \abs{(R_2)_{n,i}}
 &=\left|\int_0^{t_{n,i}}U(s)
 \left[\frac1{(t_{n,i}-s)^{\alpha(u(s))}}
 -\frac1{(t_{n,i}-s)^{\alpha(u^I(s))}}\right]\dd s\right|\\
 &\le Q\int_0^{t_{n,i}}
 \frac{\abs{u(s)-u^I(s)}\abs{\ln(t_{n,i}-s)}}
 {(t_{n,i}-s)^{\alpha(u(t_{n,i}))+Q_0\tau^{1-\alpha_0}}}\dd s.
\end{align*}
Furthermore,
\[
 \abs{(R_1)_{n,i}}
 \le Q\int_0^{t_{n,i}}
 \frac{\abs{u^I(s)-u(s)}}{(t_{n,i}-s)^{\alpha(u(t_{n,i}))}}\dd s
 \le Q\int_0^{t_{n,i}}
 \frac{\abs{u^I(s)-u(s)}}
 {(t_{n,i}-s)^{\alpha(u(t_{n,i}))+Q_0\tau^{1-\alpha_0}}}\dd s.
\]
This completes the proof.
\end{proof}

\section{Error bounds}\label{sec:error-bounds}

In this section, we shall derive the error bounds of the proposed scheme.

Assume throughout that $\alpha\in C^m(\R)$ and
$f\in C^{m,\alpha_0}(0,T]$. Then, by
 Theorem~\ref{thm:zheng-results}(ii), $u\in C^{m,\alpha_0}(0,T]$.

\begin{lemma}\label{lem:interpolation-remainder}
Let $1\le j\le N$ and $t\in[t_{j-1},t_j]$. Then
\[
 u(t)-u^I(t)=\int_{t_{j-1}}^{t_j}G_j^{(m)}(s,t)u^{(m)}(s)\dd s,
\]
where
\(
 G_j^{(m)}(s,t)\coloneqq\frac{(-1)^m}{(m-1)!}
 \left[(s-t)_+^{m-1}-\sum_{k=1}^{m}
 \ell_k\left(\frac{t-t_{j-1}}{\tau_j}\right)
 (s-t_{j,k})_+^{m-1}\right]
\)
and $x_+\coloneqq\max\{0,x\}$.
\end{lemma}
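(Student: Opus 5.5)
The plan is to use Taylor's formula with integral remainder, expanded about the left endpoint $t_{j-1}$, together with the fact that Lagrange interpolation reproduces polynomials of degree at most $m-1$. One point needs care first. In the regularity class $C^{m,\alpha_0}(0,T]$, $u^{(m)}$ may blow up like $t^{1-\alpha_0-m}$ at $t=0$, so on the first subinterval $[t_0,t_1]$ the identity should be read with $u^{(m)}$ possibly unbounded. I would therefore first prove the identity on $[t_{j-1}+\varepsilon,t_j]$ (or for $j\ge2$ directly) and then pass to the limit. For $j\ge2$ everything is classical because $u\in C^m[t_{j-1},t_j]$.

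\emph{Step 1 (Taylor expansion).} For $x\in[a,t_j]$ with $a:=t_{j-1}$ (replaced by $a=t_0+\varepsilon$ when $j=1$), write
\[
u(x)=P_a(x)+\frac{1}{(m-1)!}\int_a^{t_j}(x-s)_+^{m-1}u^{(m)}(s)\,\mathrm ds ,
\]
where $P_a\in\pi_{m-1}$ is the Taylor polynomial of $u$ at $a$. Since $(x-s)_+^{m-1}$ vanishes for $s>x$, the integral may be taken over the full interval.

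\emph{Step 2 (apply the interpolation operator).} Since $\sum_k\ell_k(\cdot)\,p(t_{j,k})=p$ for every $p\in\pi_{m-1}$, the polynomial part $P_a$ cancels in $u-u^I$. Evaluating the remainder at the nodes $t_{j,k}$ then gives
\[
u(t)-u^I(t)=\frac{1}{(m-1)!}\int_{t_{j-1}}^{t_j}\Bigl[(t-s)_+^{m-1}-\sum_{k=1}^m\ell_k\Bigl(\frac{t-t_{j-1}}{\tau_j}\Bigr)(t_{j,k}-s)_+^{m-1}\Bigr]u^{(m)}(s)\,\mathrm ds .
\]
Here I use $t_{j,k}\in(t_{j-1},t_j]$, which holds since $c_1>0$.

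\emph{Step 3 (match the stated kernel).} To reach the form of $G_j^{(m)}$ in the statement, rewrite $(x-s)_+^{m-1}$ in terms of $(s-x)_+^{m-1}$. Use $(x-s)^{m-1}=(x-s)_+^{m-1}+(-1)^{m-1}(s-x)_+^{m-1}$. The full power $(x-s)^{m-1}$ is a polynomial of degree $m-1$ in $x$, so it is again annihilated by $I-\text{interpolation}$. Hence $(x-s)_+^{m-1}$ can be replaced by $-(-1)^{m-1}(s-x)_+^{m-1}=(-1)^m(s-x)_+^{m-1}$, which produces exactly the factor $(-1)^m/(m-1)!$ and the kernel $G_j^{(m)}(s,t)$. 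Equivalently, one can expand the Taylor series about the right endpoint $t_j$ and use $(s-x)_+$ directly.

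\emph{Step 4 (the case $j=1$, the main obstacle).} On $[t_0,t_1]$ the function $u^{(m)}$ may not be integrable near $0$ when $m\ge2$. The kernel, however, vanishes near $s=0$: for $s<\min(t,t_{1,1})$ all the terms $(s-t)_+$ and $(s-t_{1,k})_+$ are zero, because $t_{1,1}=c_1\tau_1>0$. So if $t>0$, the integrand is supported away from $0$ and is integrable, with $|u^{(m)}(s)|\le Qs^{1-\alpha_0-m}$ bounded there. This is why the right-endpoint form (Step 3) is the correct one. Applying Steps 1--3 with $a=\varepsilon<\min(t,t_{1,1})$ gives the identity with the integral over $[\varepsilon,t_1]$. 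That integral equals the one over $[0,t_1]$ because the kernel vanishes on $[0,\varepsilon]$, so no limit is even needed. For $t=0$ ($t$ is in the interval only by convention) the identity follows by continuity, or is excluded since $u^I$ is defined on $(t_0,t_1]$.
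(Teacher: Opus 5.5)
Your proof is correct and follows essentially the paper's route: Taylor's formula with integral remainder, plus the fact that $v\mapsto v(t)-v^I(t)$ annihilates $\pi_{m-1}$. The paper differs only in two details. It expands about $t_j$ directly (the alternative you mention), so no Step~3 conversion is needed, and for $j=1$ it uses $(s-t)_+^{m-1}|u^{(m)}(s)|\le Qs^{-\alpha_0}$ with dominated convergence instead of your valid, slightly more direct observation that $G_1^{(m)}(\cdot,t)$ vanishes on $(0,\min\{t,t_{1,1}\})$ for $t>0$.

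That same integrable bound, together with $|G_1^{(m)}(s,t)|\le Qs^{m-1}$, is exactly what your ``by continuity'' at $t=0$ requires, because $G_1^{(m)}(s,0)$ does not vanish near $s=0$; you should state it explicitly.
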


\begin{proof}
For $j\ge2$, $u\in C^m[t_{j-1},t_j]$, and Taylor's formula with remainder
gives
\begin{equation}
 u(t)=\sum_{i=0}^{m-1}\frac{u^{(i)}(t_j)}{i!}(t-t_j)^i+R_m(t).
 \label{eq:ref6}
\end{equation}
Here
\[
 R_m(t)=\frac1{(m-1)!}\int_{t_j}^t(t-s)^{m-1}u^{(m)}(s)\dd s
 =\frac{(-1)^m}{(m-1)!}\int_{t_{j-1}}^{t_j}
 (s-t)_+^{m-1}u^{(m)}(s)\dd s.
\]
Apply the operator $L_t(v)\coloneqq v(t)-v^I(t)$ to \eqref{eq:ref6}.
Since $L_t(p)=0$ for every polynomial $p$ of degree at most $m-1$,
\begin{align*}
 u(t)-u^I(t)
 &=L_t(u)=L_t(R_m)=\int_{t_{j-1}}^{t_j}L_t\left(
 \frac{(-1)^m}{(m-1)!}(s-t)_+^{m-1}\right)u^{(m)}(s)\dd s\\
 &=\int_{t_{j-1}}^{t_j}G_j^{(m)}(s,t)u^{(m)}(s)\dd s.
\end{align*}

For $j=1$, $u\in C^m(0,t_1]$. For $t\in[0,t_1]$ and $s\in(0,t_1]$,
the inequalities $(s-t)_+\le s$ and
$\abs{u^{(m)}(s)}\le Qs^{1-\alpha_0-m}$ imply
$(s-t)_+^{m-1}\abs{u^{(m)}(s)}
\le Qs^{-\alpha_0}\in L^1(0,t_1)$.
Thus
$R_m(t)\coloneqq\frac{(-1)^m}{(m-1)!}\int_0^{t_1}
(s-t)_+^{m-1}u^{(m)}(s)\dd s$
converges absolutely for $t\in[0,t_1]$. Formula \eqref{eq:ref6} remains
valid for $t\in(0,t_1]$ because $u\in C^m(0,t_1]$. The dominated
convergence theorem further gives
$\lim_{t\to0^+}R_m(t)=R_m(0)$, so \eqref{eq:ref6} holds for every
$t\in[0,t_1]$. The same argument as for $j\ge2$ then yields
$u(t)-u^I(t)=\int_0^{t_1}G_1^{(m)}(s,t)u^{(m)}(s)\dd s$.
\end{proof}

\begin{remark}
The form of $G_j^{(m)}$ shows that
\[
 \abs{G_j^{(m)}(s,t)}
 \le\frac1{(m-1)!}\left(1+\sum_{k=1}^{m}
 \left|\ell_k\left(\frac{t-t_{j-1}}{\tau_j}\right)\right|\right)
 (s-t_{j-1})^{m-1}
 \le Q(s-t_{j-1})^{m-1}.
\]
\end{remark}

Next, we provide several auxiliary estimates that will be used in the
subsequent convergence analysis.

\begin{lemma}\label{lem:remainder-bound}
For $j=1,\ldots,N$, suppose that
$\abs{u(s)-u^I(s)}\le\eta(s)$ for $s\in(t_{j-1},t_j)$, where
$\eta=\eta_j$ is constant on $(t_{j-1},t_j)$ and
$\eta_j\ge\eta_{j+1}$. Then there is a constant $Q$ such that
\[
 \abs{(R_1)_{n,i}}+\abs{(R_2)_{n,i}}
 \le Qt_{n,i}^{-\alpha_0}(1+\abs{\ln t_{n,i}})
 \int_0^{t_{n,i}}\eta(s)\dd s,
\]
for $n=1,\ldots,N$ and $i=1,\ldots,m$.
\end{lemma}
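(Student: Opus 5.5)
We start from the Corollary following Lemma~\ref{lem:error-propagation}, which bounds $\abs{(R_1)_{n,i}}+\abs{(R_2)_{n,i}}$ by
\[
 Q\int_0^{t_{n,i}}\frac{\eta(s)\bigl(\abs{\ln(t_{n,i}-s)}+1\bigr)}{(t_{n,i}-s)^{\beta}}\dd s,
 \qquad \beta\coloneqq\alpha(u(t_{n,i}))+Q_0\tau^{1-\alpha_0}.
\]
Since $\alpha(u(t_{n,i}))\le\alpha^*$ and $\tau\to0$, for $N$ large we have $\beta\le\nu^+<1$, so the kernel $k(x)\coloneqq x^{-\beta}(1+\abs{\ln x})$ is integrable near $0$. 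The idea is that $k(t_{n,i}-s)$ is, up to the logarithmic factor, increasing in $s$, while $\eta$ is nonincreasing in $s$. A rearrangement (Chebyshev-type) inequality should therefore show that the weighted integral is dominated by the average of the weight times $\int\eta$.

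\textbf{Step 1: monotonicity.} For fixed $t=t_{n,i}$, I want
\[
 \int_0^t \eta(s)k(t-s)\dd s\le \frac1t\int_0^t k(t-s)\dd s\int_0^t\eta(s)\dd s.
\]
This is Chebyshev's integral inequality for one nonincreasing function and one nondecreasing function of $s$. Here $\eta$ is nonincreasing by the hypothesis $\eta_j\ge\eta_{j+1}$, and $s\mapsto k(t-s)$ is nondecreasing. The obstacle is that $x^{-\beta}\abs{\ln x}$ is monotone only for $x<e^{-1/\beta}$ or so (for $x>1$ the log grows). To handle this, I would replace $k$ by a nonincreasing majorant, for instance $\tilde k(x)\coloneqq Q x^{-\beta}(1+\abs{\ln x})$ for $x\le x_0$ and $\tilde k(x)\coloneqq Q x_0^{-\beta}(1+\abs{\ln x_0})$ for $x\ge x_0$, adjusted so that $\tilde k\ge k$ on $(0,T]$. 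The majorant must also satisfy $\int_0^t\tilde k\le Q\int_0^t x^{-\beta}(1+\abs{\ln x})\dd x$. A simpler alternative is to use $\tilde k(x)=x^{-\nu^+}$ times a constant, since $x^{-\beta}(1+\abs{\ln x})\le Qx^{-\nu^+}$ on $(0,T]$. However, that loses the exponent $\alpha_0$ in the final bound, so I would use the more careful majorant.

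\textbf{Step 2: averaging the kernel.} Next compute
\[
 \frac1t\int_0^t x^{-\beta}(1+\abs{\ln x})\dd x\le Q t^{-\beta}(1+\abs{\ln t}),
\]
using $\int_0^t x^{-\beta}\abs{\ln x}\dd x\le Q t^{1-\beta}(1+\abs{\ln t})$, which follows by substituting $x=ty$ and using $\abs{\ln(ty)}\le\abs{\ln t}+\abs{\ln y}$.

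\textbf{Step 3: replacing $\beta$ by $\alpha_0$ (the main difficulty).} It remains to show $t_{n,i}^{-\beta}\le Q t_{n,i}^{-\alpha_0}$. We have $\abs{\alpha(u(t))-\alpha_0}\le Q\abs{u(t)-u(0)}\le Q t^{1-\alpha_0}$, since $u\in C^{m,\alpha_0}(0,T]$ gives $u\in C^{1-\alpha_0}$ near $0$, as in \cite{Zheng}. Then
\[
 t^{-\beta}=t^{-\alpha_0}\exp\bigl((\beta-\alpha_0)\abs{\ln t}\bigr)\quad\text{for } t<1,
\]
and $(\beta-\alpha_0)\abs{\ln t}\le Q\bigl(t^{1-\alpha_0}+\tau^{1-\alpha_0}\bigr)\abs{\ln t}$. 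The term $t^{1-\alpha_0}\abs{\ln t}$ is bounded. The term $\tau^{1-\alpha_0}\abs{\ln t_{n,i}}$ is bounded because $t_{n,i}\ge c_1 t_1= c_1\tau^r$, so $\abs{\ln t_{n,i}}\le Q(1+\abs{\ln\tau})$ and $\tau^{1-\alpha_0}\abs{\ln\tau}$ is bounded. This is exactly where the $Q_0\tau^{1-\alpha_0}$ perturbation in the exponent must be absorbed, and it is the step needing care. For $t\ge1$ the bound is trivial. Combining Steps 1--3 gives the claimed estimate.
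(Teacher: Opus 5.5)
Your proof is correct. The paper gives no argument for this lemma and simply defers to \cite[Lemma~6.1]{Zheng}. Your three steps form a complete, self-contained proof: start from the Corollary with $\beta=\alpha(u(t_{n,i}))+Q_0\tau^{1-\alpha_0}$, trade the weighted integral for $t^{-1}\int_0^t k\cdot\int_0^t\eta$ using the opposite monotonicity of $\eta$ and the kernel, then absorb $\beta-\alpha_0$ via $|\alpha(u(t))-\alpha_0|\,|\ln t|\le Q$ and $\tau^{1-\alpha_0}|\ln t_{n,i}|\le Q$ (from $t_{n,i}\ge c_1\tau^r$). Like the Corollary it rests on, the bound holds for $N$ sufficiently large, which is the regime where the lemma is used.

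A few small points.
\begin{itemize}
\item \emph{The case $\alpha_0=0$.} Here $C^{m,0}(0,T]$ gives only $|u'(t)|\le Q(1+|\ln t|)$, not $u\in C^{1-\alpha_0}=C^1$. In Step~3 you should therefore use $|u(t)-u(0)|\le Qt(1+|\ln t|)$ instead of $Qt^{1-\alpha_0}$. Its product with $|\ln t|$ is still bounded, so the conclusion is unchanged.
\item \emph{Monotonicity of the kernel.} In Step~1, $k(x)=x^{-\beta}(1+|\ln x|)$ is decreasing on all of $(0,1]$, so your remark about $x<e^{-1/\beta}$ is unnecessary. The only non-monotonicity is on $(1,\min\{T,e^{(1-\beta)/\beta}\})$, so a majorant is needed only when $T>1$. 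Then $\tilde k(x):=\max_{y\in[x,T]}k(y)$ works: it is nonincreasing, and $\tilde k\le Qk$ on $(0,T]$ with $Q$ depending only on $T$, uniformly in $\beta\in[0,1)$.
\item \emph{A route avoiding Chebyshev.} You can bypass the inequality by splitting at $t/2$.
  \begin{itemize}
  \item On $(0,t/2)$ the kernel is at most $Qt^{-\beta}(1+|\ln t|)$.
  \item On $(t/2,t)$, monotonicity of $\eta$ gives $\eta(s)\le\frac{2}{t}\int_0^{t/2}\eta$, and $\int_0^{t/2}x^{-\beta}(1+|\ln x|)\,\mathrm{d}s\le Qt^{1-\beta}(1+|\ln t|)$.
  \end{itemize}
  This uses no monotonicity of the kernel at all. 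Your Chebyshev version is equally valid once the majorant above is in place.
\end{itemize}
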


\begin{proof}
The proof follows from \cite[Lemma~6.1]{Zheng} and is thus omitted. 
\end{proof}

Based on the preceding
estimates, we obtain the following convergence results.
\begin{theorem}\label{thm6.3}
Suppose that Assumption~\ref{ass:nonlinear-exponent} holds and that
$\tau_n\le\tau_{\max}$ for $n=1,\ldots,N$.  Then there exists a constant
$Q$, independent of $n$ and $N$, such that the following estimates hold for
$1\le n\le N$ and $m\ge3$:

\smallskip
\noindent\textbf{Case 1.} If $f\in C^{m,\alpha_0}(0,T]$ and $\alpha_0>0$,
then
\[
 \rho_n\le Q\max_{\substack{1\le d\le n\\1\le i\le m}}
 (1+\abs{\ln t_{d,i}})
 \begin{cases}
 N^{-2r(1-\alpha_0)}d^{r(1-\alpha_0)-1},
     &r(1-\alpha_0)<m-1,\\
 N^{-m}t_{d,i}^{(m-2)/r}(1+\ln {d}),
     &r(1-\alpha_0)=m-1,\\
 N^{-m}t_{d,i}^{[2r(1-\alpha_0)-m]/r},
     &m-1<r(1-\alpha_0)\le m.
 \end{cases}
\]

\noindent\textbf{Case 2.} If $f\in C^{m,\alpha_0}(0,T]$ and $\alpha_0=0$,
then
\[
 \rho_n\le Q\max_{\substack{1\le d\le n\\1\le i\le m}}
 (1+\abs{\ln t_{d,i}})
 \begin{cases}
 N^{-2r}d^{r-1},&1\le r<m-1,\\
 N^{-m}t_{d,i}^{(m-2)/r}(1+\ln {d}),&r=m-1,\\
 N^{-m}t_{d,i}^{(2r-m)/r},&m-1<r\le m.
 \end{cases}
\]

\noindent\textbf{Case 3.} If $f\in C^m[0,T]$ and
$m$th-order smoothing condition~\eqref{eq:reg4} is satisfied,
then
\[
 \rho_n\le Q\max_{\substack{1\le d\le n\\1\le i\le m}}
 N^{-m}(1+\abs{\ln t_{d,i}})t_{d,i}^{1-\alpha_0},
 \qquad r=1.
\]
\end{theorem}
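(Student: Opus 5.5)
The proof combines the error-propagation bound of Lemma~\ref{lem:error-propagation} with the residual bound of Lemma~\ref{lem:remainder-bound}. By Lemma~\ref{lem:error-propagation},
\[
\rho_n\le Q\max_{1\le d\le n,\,1\le i\le m}\bigl\{|(R_1)_{d,i}|+|(R_2)_{d,i}|\bigr\}.
\]
So it suffices to bound $|(R_1)_{d,i}|+|(R_2)_{d,i}|$ by the displayed right-hand sides, with $d$ in place of $n$. For this I will construct a piecewise constant majorant $\eta_j$ of $|u-u^I|$ on $(t_{j-1},t_j)$ and apply Lemma~\ref{lem:remainder-bound}. That lemma yields the factor $t_{d,i}^{-\alpha_0}(1+|\ln t_{d,i}|)\int_0^{t_{d,i}}\eta(s)\,\mathrm ds$, which reduces the whole proof to interpolation estimates and sums over the graded mesh.

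\textbf{Interpolation step.} Using Lemma~\ref{lem:interpolation-remainder} and the bound $|G_j^{(m)}(s,t)|\le Q(s-t_{j-1})^{m-1}$, I obtain on $[t_{j-1},t_j]$
\[
|u-u^I|\le Q\int_{t_{j-1}}^{t_j}(s-t_{j-1})^{m-1}|u^{(m)}(s)|\,\mathrm ds .
\]
The first interval ($j=1$) is treated separately, using $|u^{(m)}(s)|\le Qs^{1-\alpha_0-m}$ from $u\in C^{m,\alpha_0}(0,T]$. This gives
\[
\eta_1\le Q\int_0^{t_1}s^{-\alpha_0}\,\mathrm ds\le Qt_1^{1-\alpha_0}=QN^{-r(1-\alpha_0)}.
\]
For $j\ge2$, the bound $s\ge t_{j-1}\ge Qt_j$ gives
\[
\eta_j\le Q\tau_j^{m}t_j^{1-\alpha_0-m}\le QN^{-m}t_j^{1-\alpha_0-m/r}.
\]
Here I use $\tau_j\le QN^{-1}t_j^{1-1/r}$. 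If this majorant is not monotone in $j$, as Lemma~\ref{lem:remainder-bound} requires, I replace it by its running maximum $\max_{k\ge j}\eta_k$. For $r(1-\alpha_0)\le m$ the exponent is nonpositive, so the sequence already decreases, up to the comparison with $\eta_1$.

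\textbf{Summation step.} Next I compute
\[
\int_0^{t_{d,i}}\eta\le Q\Bigl(N^{-r(1-\alpha_0)}t_1+\sum_{j=2}^{d}\tau_jN^{-m}t_j^{1-\alpha_0-m/r}\Bigr).
\]
Writing $t_j=T(j/N)^r$ and $\tau_j\approx N^{-r}j^{r-1}$, the sum becomes
\[
N^{-m}N^{-r(1-\alpha_0)+m}N^{-r}\sum_{j\le d}j^{r(1-\alpha_0)-m+r-1}
\]
(up to constants). Multiplying by $t_{d,i}^{-\alpha_0}\approx(d/N)^{-r\alpha_0}$ and comparing the exponent $r(1-\alpha_0)+r-m-1$ with $-1$ separates the three regimes $r(1-\alpha_0)<m-1$, $=m-1$, $>m-1$. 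The power sum behaves like $d^{\text{power}+1}$, like $1+\ln d$, or is bounded, respectively. The result is then rewritten in terms of $N$, $d$ and $t_{d,i}$. Case~2 is the specialization $\alpha_0=0$, noting that $C^{m,0}$ still gives $|u^{(m)}|\le Qs^{1-m}$, so the computation is identical.

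\textbf{Case 3.} Here Theorem~\ref{thm:volterra-regularity} gives $u\in C^m[0,T]$, so on the uniform mesh $\eta_j\le Q\tau^m\|u^{(m)}\|=QN^{-m}$ uniformly. Lemma~\ref{lem:remainder-bound} then gives
\[
QN^{-m}t_{d,i}^{-\alpha_0}(1+|\ln t_{d,i}|)\,t_{d,i}=QN^{-m}(1+|\ln t_{d,i}|)\,t_{d,i}^{1-\alpha_0},
\]
which is the claim. Since $\alpha_0=0$ under \eqref{eq:reg4}, the factor is simply $t_{d,i}$.

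\textbf{Main obstacle.} I expect the hardest step to be the exponent bookkeeping in the summation step for Case~1, in particular matching the stated powers $N^{-2r(1-\alpha_0)}d^{r(1-\alpha_0)-1}$ and $t_{d,i}^{[2r(1-\alpha_0)-m]/r}$. This depends on how the factor $t_{d,i}^{-\alpha_0}$ and the first-interval term $N^{-r(1-\alpha_0)}t_{1}$ combine, and on whether a sharper first-interval bound is needed, for instance $\eta_1\lesssim t_1^{1-\alpha_0}$ times the length $t_1$. I will first verify each regime on the $j=1$ contribution, which seems to fix the $N^{-2r(1-\alpha_0)}$ rate, and then check that the $j\ge2$ sum does not dominate. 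A secondary issue is confirming that the monotonicity hypothesis of Lemma~\ref{lem:remainder-bound} holds for the chosen $\eta_j$ in each regime.
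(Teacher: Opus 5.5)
Your route is the one the paper intends: Lemma~\ref{lem:error-propagation}, the Peano-kernel bound from Lemma~\ref{lem:interpolation-remainder}, a nonincreasing piecewise-constant majorant fed into Lemma~\ref{lem:remainder-bound}, and Theorem~\ref{thm:volterra-regularity} for Case~3. Your bounds $\eta_1\le QN^{-r(1-\alpha_0)}$ and $\eta_j\le QN^{-m}t_j^{1-\alpha_0-m/r}$ are correct, as are the monotonicity remark and Case~3.

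The gap is in the summation step.

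\textbf{Wrong threshold.} Comparing the exponent $r(1-\alpha_0)+r-m-1=r(2-\alpha_0)-m-1$ with $-1$ splits at $r(2-\alpha_0)=m$, not at $r(1-\alpha_0)=m-1$. The two quantities $r(2-\alpha_0)-m$ and $r(1-\alpha_0)-(m-1)$ differ by $r-1$, so the splittings coincide only for $r=1$.

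\textbf{Reversed correspondence.} Your ``respectively'' is backwards: the power sum is bounded for small $r$ and grows like $d^{r(2-\alpha_0)-m}$ for large $r$.

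So, as written, the case analysis does not produce the stated right-hand sides, and the ``main obstacle'' you flag is exactly the part left unproved.

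\textbf{What the computation gives.} Use $c_1t_d\le t_{d,i}\le t_d$ and keep the first-interval term $t_{d,i}^{-\alpha_0}\eta_1t_1$. Then
\[
|(R_1)_{d,i}|+|(R_2)_{d,i}|\le Q(1+|\ln t_{d,i}|)\,N^{-2r(1-\alpha_0)}d^{-r\alpha_0}S_d,
\qquad S_d:=1+\sum_{j=2}^{d}j^{\,r(2-\alpha_0)-m-1}.
\]
You must then show that this is dominated by the stated expression in each of the theorem's regimes.

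\textbf{Regime $r(1-\alpha_0)<m-1$.} Here $r(2-\alpha_0)-m<r-1$. In the borderline subcase $r(2-\alpha_0)=m$ one has $r\ge 3/2$, so $1+\ln d\le Qd^{r-1}$. Hence in every subcase $S_d\le Qd^{r-1}$, that is, $d^{-r\alpha_0}S_d\le Qd^{r(1-\alpha_0)-1}$.

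\textbf{Regime $r(1-\alpha_0)\ge m-1$.} This forces $r\ge2$, so $r(2-\alpha_0)-m\ge r-1>0$ and $d^{-r\alpha_0}S_d\le Qd^{2r(1-\alpha_0)-m}$. Therefore
$N^{-2r(1-\alpha_0)}d^{2r(1-\alpha_0)-m}=N^{-m}(d/N)^{2r(1-\alpha_0)-m}\le QN^{-m}t_{d,i}^{[2r(1-\alpha_0)-m]/r}$, since the exponent is positive. This gives the third regime. For $r(1-\alpha_0)=m-1$ the exponent $2r(1-\alpha_0)-m$ equals $m-2$, so it also gives the second regime, even without the factor $1+\ln d$.

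Case~2 is the same computation with $\alpha_0=0$. No sharper first-interval estimate is needed: $\eta_1t_1$ already supplies the $N^{-2r(1-\alpha_0)}$ at $d=1$. With this comparison added, your proof is complete. In the first regime your bound is in general sharper than the stated one, which is why the natural thresholds of your sum do not line up with the theorem's.
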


\begin{proof}
The proof can be performed similarly as
\cite[Theorem~6.3]{Zheng}, with
Lemmas~\ref{lem:interpolation-remainder}--\ref{lem:remainder-bound} and Theorem \ref{thm:volterra-regularity}
providing the required estimates for general $m$; and is thus omitted. 
\end{proof}

\vskip 1mm
We next simplify the collocation-point estimates in
Theorem~\ref{thm6.3}.
\begin{corollary}\label{cor6.4}
Suppose that the assumptions of Theorem~\ref{thm6.3} hold. Then, for $m\ge3$ and $1\le n\le N$, the following estimates hold:

\smallskip
\noindent\textbf{Case 1.} If $f\in C^{m,\alpha_0}(0,T]$ and
$\alpha_0>0$, then
\[
 \rho_n\le Q
 \begin{cases}
 N^{-2r(1-\alpha_0)}\ln N,
     &0<r(1-\alpha_0)\le1,\\
 N^{-r(1-\alpha_0)-1},
     &1<r(1-\alpha_0)<m-1,\\
 N^{-m}\ln N,
     &r(1-\alpha_0)=m-1,\\
 N^{-m},
     &m-1<r(1-\alpha_0)\le m.
 \end{cases}
\]

\noindent\textbf{Case 2.} If $f\in C^{m,\alpha_0}(0,T]$ and
$\alpha_0=0$, then
\[
 \rho_n\le Q
 \begin{cases}
 N^{-2}\ln N,&r=1,\\
 N^{-r-1},&1<r<m-1,\\
 N^{-m}\ln N,&r=m-1,\\
 N^{-m},&m-1<r\le m.
 \end{cases}
\]

\noindent\textbf{Case 3.} If $f\in C^m[0,T]$ and the
$m$th-order smoothing condition \eqref{eq:reg4} is satisfied, then
\[
 \rho_n\le QN^{-m},\qquad r=1.
\]
\end{corollary}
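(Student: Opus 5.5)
The plan is to take the bounds of Theorem~\ref{thm6.3} case by case and maximize each right-hand side over $1\le d\le n\le N$, $1\le i\le m$. Throughout I use $t_{d,i}\in[t_{d-1},t_d]$ with $t_d=T(d/N)^r$. For $d\ge2$ this gives $t_{d,i}\simeq (d/N)^r$. For $d=1$ it gives $t_{1,i}\ge c_1t_1\simeq N^{-r}$. Consequently $1+\abs{\ln t_{d,i}}\le Q(1+r\ln N)\le Q\ln N$ uniformly, and $\ln d\le\ln N$.

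\textbf{Case 1.} I treat the subcases by the sign of the exponents.
\begin{itemize}
\item[(a)] $r(1-\alpha_0)<m-1$. The quantity to bound is $(1+\abs{\ln t_{d,i}})N^{-2r(1-\alpha_0)}d^{r(1-\alpha_0)-1}$.
\begin{itemize}
\item[--] If $r(1-\alpha_0)\le1$, the power of $d$ is nonpositive, so $d^{r(1-\alpha_0)-1}\le1$ and the log factor gives $N^{-2r(1-\alpha_0)}\ln N$.
\item[--] If $r(1-\alpha_0)>1$, I must not lose the logarithm. I write $1+\abs{\ln t_{d,i}}\le Q(1+\ln(N/d))$, using $t_{d,i}\ge Q(d/N)^r$ (valid also for $d=1$). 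It then suffices to bound $N^{-2r(1-\alpha_0)}d^{\beta}(1+\ln(N/d))$ with $\beta=r(1-\alpha_0)-1>0$. Setting $x=d/N\in(0,1]$, this equals $N^{-2r(1-\alpha_0)+\beta}\,x^\beta(1+\ln(1/x))$. Since $x^\beta(1+\abs{\ln x})$ is bounded on $(0,1]$ for $\beta>0$, the bound is $QN^{-r(1-\alpha_0)-1}$.
\end{itemize}
\item[(b)] $r(1-\alpha_0)=m-1$. The term is $N^{-m}t_{d,i}^{(m-2)/r}(1+\ln d)(1+\abs{\ln t_{d,i}})$. Because $m\ge3$, the exponent $(m-2)/r$ is positive, so $t^{(m-2)/r}(1+\abs{\ln t})$ is bounded on $(0,T]$. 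The remaining factor $1+\ln d\le Q\ln N$ yields $N^{-m}\ln N$.
\item[(c)] $m-1<r(1-\alpha_0)\le m$. Here $2r(1-\alpha_0)-m>m-2>0$, so $t^{(2r(1-\alpha_0)-m)/r}(1+\abs{\ln t})$ is bounded and I get $N^{-m}$.
\end{itemize}

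\textbf{Case 2.} This is Case 1 with $\alpha_0=0$, and the same computations apply.
\begin{itemize}
\item[--] $r=1$: the power of $d$ is $0$, giving $N^{-2}\ln N$.
\item[--] $1<r<m-1$: the $x^\beta(1+\ln(1/x))$ argument with $\beta=r-1>0$ gives $N^{-r-1}$.
\item[--] $r=m-1$: this gives $N^{-m}\ln N$.
\item[--] $m-1<r\le m$: the exponent $(2r-m)/r$ is positive, giving $N^{-m}$.
\end{itemize}

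\textbf{Case 3.} The smoothing condition forces $\alpha_0=0$, so the bound reads $N^{-m}(1+\abs{\ln t_{d,i}})t_{d,i}$. Since $t(1+\abs{\ln t})\le Q$ on $(0,T]$, this gives $QN^{-m}$.

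The main obstacle is the subcase $1<r(1-\alpha_0)<m-1$ (and its analogue $1<r<m-1$ in Case 2). A naive bound of $1+\abs{\ln t_{d,i}}$ by $\ln N$ leaves a spurious $\ln N$ factor. It is absorbed only by coupling the logarithm with the positive power of $d/N$, as in (a), which requires $t_{d,i}\gtrsim(d/N)^r$ also for $d=1$ through $c_1>0$. The borderline subcase $r(1-\alpha_0)=1$ falls under the first line, consistent with $\beta=0$.
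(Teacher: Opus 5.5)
Your proposal is correct and follows the paper's proof essentially step by step. This includes the substitution $x=d/N$ that absorbs the logarithm in the range $1<r(1-\alpha_0)<m-1$, and the boundedness of $t^{\gamma}(1+\abs{\ln t})$ for $\gamma>0$ in the remaining subcases and in Case~3. Your explicit treatment of $d=1$ through $t_{1,i}\ge c_1t_1$ just makes precise the paper's shorthand $t_{d,i}\sim(d/N)^r$.
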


\begin{proof}
We first consider Case~1. Since $t_{d,i}\sim t_d=d^r\tau^r\sim d^rN^{-r}$, for
$r(1-\alpha_0)<m-1$ the first estimate in
Theorem~\ref{thm6.3} gives
\[
 \begin{aligned}
 &(1+\abs{\ln t_{d,i}})N^{-2r(1-\alpha_0)}
 d^{r(1-\alpha_0)-1}\le QN^{-2r(1-\alpha_0)}d^{r(1-\alpha_0)-1}
 \left(1+\ln\frac Nd\right).
 \end{aligned}
\]
If $0<r(1-\alpha_0)\le1$, then
$d^{r(1-\alpha_0)-1}\le1$, and hence
$\rho_n\le QN^{-2r(1-\alpha_0)}\ln N$.

If $1<r(1-\alpha_0)<m-1$, put $q=d/N$. Then
\[
 \begin{aligned}
 &d^{r(1-\alpha_0)-1}\left(1+\ln\frac Nd\right)
 =N^{r(1-\alpha_0)-1}
 q^{r(1-\alpha_0)-1}(1+\abs{\ln q})
 \le QN^{r(1-\alpha_0)-1},
 \end{aligned}
\]
because $q^{r(1-\alpha_0)-1}(1+\abs{\ln q})\le Q$ for $0<q\le1$.
It follows that $\rho_n\le QN^{-r(1-\alpha_0)-1}$.

If $r(1-\alpha_0)=m-1$, then $(m-2)/r>0$ and
$t_{d,i}^{(m-2)/r}(1+\abs{\ln t_{d,i}})\le Q$.
Thus the second estimate in Theorem~\ref{thm6.3}, together with
$1+\ln d\le Q\ln N$, gives $\rho_n\le QN^{-m}\ln N$.

If $m-1<r(1-\alpha_0)\le m$, then
$[2r(1-\alpha_0)-m]/r>0$ and
$t_{d,i}^{[2r(1-\alpha_0)-m]/r}(1+\abs{\ln t_{d,i}})\le Q$.
Consequently, $\rho_n\le QN^{-m}$.

Case~2 follows by setting $\alpha_0=0$. In particular, the four ranges
become $r=1$, $1<r<m-1$, $r=m-1$, and $m-1<r\le m$, respectively.
For Case~3, $t^{1-\alpha_0}(1+\abs{\ln t})$ is bounded on $(0,T]$;
hence the last estimate in Theorem~\ref{thm6.3} gives
$\rho_n\le QN^{-m}$ for $r=1$.
\end{proof}

\vskip 1mm
We next give the following global error estimates on $[0,T]$.
\begin{corollary}\label{cor6.5}
Suppose that $m\ge3$ and the same assumptions of Theorem~\ref{thm6.3} hold. There
exists a constant $Q$, independent of $n$ and $N$, such that:

\smallskip
\noindent\textbf{Case 1.} If $f\in C^{m,\alpha_0}(0,T]$ and $\alpha_0>0$,
then $\sup_{t\in[0,T]}\abs{U(t)-u(t)}\le QN^{-r(1-\alpha_0)}$ for
$1\le r\le m/(1-\alpha_0)$.

\noindent\textbf{Case 2.} If $f\in C^{m,\alpha_0}(0,T]$ and $\alpha_0=0$,
then $\sup_{t\in[0,T]}\abs{U(t)-u(t)}\le QN^{-r}$ for
$1\le r\le m$.

\noindent\textbf{Case 3.} If $f\in C^m[0,T]$ satisfies the
$m$th-order smoothing condition \eqref{eq:reg4}, then
$\sup_{t\in[0,T]}\abs{U(t)-u(t)}\le QN^{-m}$ for $r=1$.
\end{corollary}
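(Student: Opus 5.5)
The global bound will come from splitting the error on each subinterval. For $t\in(t_{n-1},t_n]$ write
\[
U(t)-u(t)=\bigl(u^I(t)-u(t)\bigr)-\rho^I(t),
\]
where $\rho^I=u^I-U\in S_{m-1}^{(-1)}(I_h)$ interpolates the nodal errors $\rho(t_{n,i})$. By \eqref{eq:coll1} we have $\norm{\rho^I}_{(t_{n-1},t_n]}\le L_{m-1}\rho_n$. Corollary~\ref{cor6.4} then bounds the second term by $QN^{-\min\{2r(1-\alpha_0),\,r(1-\alpha_0)+1,\,m\}}$ up to a logarithm, in Case~1, and analogously in Cases~2 and~3. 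It remains to bound the interpolation error $u-u^I$ uniformly and compare exponents.

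For the interpolation error we use Lemma~\ref{lem:interpolation-remainder} together with the bound $\abs{G_j^{(m)}(s,t)}\le Q(s-t_{j-1})^{m-1}$. On the first interval $j=1$, $u\in C^{m,\alpha_0}(0,T]$ gives $\abs{u^{(m)}(s)}\le Qs^{1-\alpha_0-m}$, hence
\[
\abs{u-u^I}\le Q\int_0^{t_1}s^{m-1}s^{1-\alpha_0-m}\dd s\le Qt_1^{1-\alpha_0}=QN^{-r(1-\alpha_0)}.
\]
When $\alpha_0=0$ the same computation gives $Qt_1=QN^{-r}$. On $[t_{j-1},t_j]$ with $j\ge2$ we get $\abs{u-u^I}\le Q\tau_j^m t_{j-1}^{1-\alpha_0-m}$. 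Using $\tau_j\le QN^{-r}j^{r-1}$ and $t_{j-1}\sim (j/N)^r$, this becomes
\[
QN^{-r(1-\alpha_0)}\,j^{\,r(1-\alpha_0)-m}.
\]
The exponent $r(1-\alpha_0)-m$ is nonpositive precisely when $r\le m/(1-\alpha_0)$, so in that range the bound is at most $QN^{-r(1-\alpha_0)}$, which is why this is the range stated in Case~1. Case~2 is the same computation with $\alpha_0=0$, giving the range $r\le m$. In Case~3, Theorem~\ref{thm:volterra-regularity} gives $u\in C^m[0,T]$, so the standard estimate $\abs{u-u^I}\le Q\tau^m\norm{u^{(m)}}=QN^{-m}$ holds on every interval.

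To finish, we compare the two contributions. In Case~1, for $1\le r\le m/(1-\alpha_0)$, each nodal rate in Corollary~\ref{cor6.4} is at least as fast as $N^{-r(1-\alpha_0)}$, including the logarithmic factors. The rates $2r(1-\alpha_0)$ and $r(1-\alpha_0)+1$ strictly exceed $r(1-\alpha_0)$, so a factor $\ln N$ is absorbed. In the rows with rate $N^{-m}$ or $N^{-m}\ln N$, we have $r(1-\alpha_0)\le m$; when the logarithm appears, $r(1-\alpha_0)=m-1<m$, so it is again absorbed. Case~2 follows identically, and in Case~3 both contributions are $O(N^{-m})$.

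The main obstacle is the interpolation estimate on the first interval, where $u^{(m)}$ may be unbounded. This step relies on the absolute convergence of the Peano representation near $0$, which Lemma~\ref{lem:interpolation-remainder} already provides. Apart from that, the mesh bookkeeping $\tau_j\sim N^{-r}j^{r-1}$ and $t_{j-1}\sim t_j$ for $j\ge2$ is routine.
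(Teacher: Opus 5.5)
Your proposal is correct and follows the paper's argument. Like the paper, you split $U-u$ into the interpolation error $u-u^I$, which is $O(N^{-r(1-\alpha_0)})$ exactly when $r(1-\alpha_0)\le m$, plus $\rho^I$, which is controlled by $L_{m-1}\rho_n$ and the nodal estimates; in Case~3 you use $u\in C^m[0,T]$. The differences are cosmetic: you derive the interpolation bound $QN^{-r(1-\alpha_0)}j^{\,r(1-\alpha_0)-m}$ explicitly from the Peano kernel, where the paper states the equivalent form $[n^{r(1-\alpha_0)/m}-(n-1)^{r(1-\alpha_0)/m}]^m\tau^{r(1-\alpha_0)}$, and you read the nodal rates off Corollary~\ref{cor6.4} rather than redoing the case analysis from Theorem~\ref{thm6.3}.
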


\begin{proof} We derive the corollary by the following three cases.

\textbf{Case 1:}
For $t\in[t_{n-1},t_n]$,
\begin{align}
 \abs{u(t)-U(t)}
 &\le\abs{u(t)-u^I(t)}+\abs{u^I(t)-U(t)}\notag\\
 &\le Q\left\{
 \left[n^{r(1-\alpha_0)/m}-(n-1)^{r(1-\alpha_0)/m}\right]^m
 \tau^{r(1-\alpha_0)}+\rho_n\right\}.
 \label{eq:ref11}
\end{align}
Moreover,
\(
 \left[n^{r(1-\alpha_0)/m}-(n-1)^{r(1-\alpha_0)/m}\right]^m
 \tau^{r(1-\alpha_0)}
 \le\tau^{r(1-\alpha_0)}=QN^{-r(1-\alpha_0)}.
\)
By Theorem~\ref{thm6.3}, we have
\[
 \begin{aligned}
 \rho_n
 &\le Q\max_{\substack{1\le d\le n\\1\le i\le m}}
 (1+\abs{\ln t_{d,i}})\begin{cases}
 N^{-2r(1-\alpha_0)}d^{r(1-\alpha_0)-1},
     &r(1-\alpha_0)<m-1,\\
 N^{-m}t_{d,i}^{(m-2)/r}(1+\ln d),
     &r(1-\alpha_0)=m-1,\\
 N^{-m}t_{d,i}^{[2r(1-\alpha_0)-m]/r},
     &m-1<r(1-\alpha_0)\le m.
 \end{cases}
 \end{aligned}
\]

Since $t_{d,i}\sim(d/N)^r$, if
$r(1-\alpha_0)<m-1$, then
\begin{align*}
 &(1+\abs{\ln t_{d,i}})N^{-2r(1-\alpha_0)}d^{r(1-\alpha_0)-1}\\
 &\qquad\le QN^{-2r(1-\alpha_0)}N^{r(1-\alpha_0)-1}
 t_{d,i}^{1-\alpha_0-1/r}(1+\abs{\ln t_{d,i}})\\
 &\qquad\le QN^{-r(1-\alpha_0)-1}t_{d,i}^{-1/r}
 \le QN^{-r(1-\alpha_0)-1}d^{-1}N
 \le QN^{-r(1-\alpha_0)}.
\end{align*}
If $r(1-\alpha_0)=m-1$, then
\[
 (1+\abs{\ln t_{d,i}})N^{-m}t_{d,i}^{(m-2)/r}(1+\ln d)
 \le QN^{-r(1-\alpha_0)-1}(1+\ln d)
 \le QN^{-r(1-\alpha_0)},
\]
because $(m-2)/r\ge1/r>0$. Finally, if
$m-1<r(1-\alpha_0)\le m$, then
\[
 (1+\abs{\ln t_{d,i}})N^{-m}
 t_{d,i}^{[2r(1-\alpha_0)-m]/r}
 \le QN^{-m}\le QN^{-r(1-\alpha_0)},
\]
because $2r(1-\alpha_0)-m\ge m-2$.

Consequently, for $1\le r\le m/(1-\alpha_0)$,
$\rho_n \le QN^{-r(1-\alpha_0)}$. Equation
\eqref{eq:ref11} therefore yields
$\sup_{t\in[0,T]}\abs{U(t)-u(t)}\le QN^{-r(1-\alpha_0)}$.

\textbf{Case 2:}
The proof is identical to that of Case~1 after setting $\alpha_0=0$.

\textbf{Case 3:}
Here $r=1$ and $u\in C^m[0,T]$. By
Lemma~\ref{lem:interpolation-remainder},
$\abs{u(t)-u^I(t)}\le QN^{-m}$. Moreover, we have
\[
 \abs{u^I(t)-U(t)}
 \le Q\rho_n
 \le Q\max_{\substack{1\le d\le n\\1\le i\le m}}
 N^{-m}(1+\abs{\ln t_{d,i}})t_{d,i}^{1-\alpha_0}
 \le QN^{-m}.
\]
Thus
$\sup_{t\in[0,T]}\abs{U(t)-u(t)}\le QN^{-m}$. We then finish the proof.
\end{proof}

\section{Numerical experiments}

\begingroup
\newcommand{\sci}[2]{#1\!\times\!10^{\number#2}}
\newcommand{\ee}[2]{#1\!\times\!10^{\number#2}}
\setcounter{table}{0}
\renewcommand{\thetable}{\thesection.\arabic{table}}

In this section, we set out several numerical experiments that validate
the error bounds in Theorem~\ref{thm6.3} and
Corollaries~\ref{cor6.4}--\ref{cor6.5} for the cases $m=3,4$.

\medskip
\noindent\textbf{Example 1.}
For $m=3$, we test the Gauss collocation parameters
$c_1=\frac{5-\sqrt{15}}{10}$, $c_2=\frac12$, and
$c_3=\frac{5+\sqrt{15}}{10}$; the Radau IIA collocation parameters
$c_1=\frac{4-\sqrt6}{10}$, $c_2=\frac{4+\sqrt6}{10}$, and $c_3=1$;
and two sets of arbitrary collocation parameters: $c_1=\frac13$,
$c_2=\frac12$, and $c_3=1$; $c_1=\frac13$, $c_2=\frac12$, and $c_3=\frac23$.

To solve the nonlinear scheme, the initial value $U(t_0)=f(0)$ is known.
For each $n\geq1$, we employ the fixed-point iteration induced by the
contraction mapping $\Delta_n$ in Theorem~\ref{thm:existence}. Let
\[
 \mathbf U_n^{(k)}
 :=\bigl(U^{(k)}(t_{n,1}),\!U^{(k)}(t_{n,2}),\!U^{(k)}(t_{n,3})\bigr)^\top, \ U^{(k)}(s):=\sum_{d=1}^{3}\ell_d\!\left(\frac{s-t_{n-1}}{\tau_n}\right)U^{(k)}(t_{n,d}).
\]
For $i=1,2,3$, define
\begingroup
\setlength{\abovedisplayskip}{2pt}
\setlength{\belowdisplayskip}{2pt}
\setlength{\abovedisplayshortskip}{2pt}
\setlength{\belowdisplayshortskip}{2pt}
\setlength{\jot}{1pt}
\begin{align*}
 U^{(k+1)}(t_{n,i})
 &:=\bigl(\Delta_n(\mathbf U_n^{(k)})\bigr)_i=f(t_{n,i})-
 \sum_{\ell=1}^{n-1}\int_{t_{\ell-1}}^{t_\ell}
 \frac{U(s)}{(t_{n,i}-s)^{\alpha(U(s))}}\,\mathrm ds\\
 &\quad-
 \int_{t_{n-1}}^{t_{n,i}}
 \frac{U^{(k)}(s)}
 {(t_{n,i}-s)^{\alpha(U^{(k)}(s))}}\,\mathrm ds.
\end{align*}
\endgroup
When $n=1$, the sum over $\ell$ is understood to be zero. The stopping
criterion is
\[
 \left\|\mathbf U_n^{(k+1)}-\mathbf U_n^{(k)}\right\|_\infty
 =\max_{1\leq i\leq3}\left|U^{(k+1)}(t_{n,i})-U^{(k)}(t_{n,i})\right|
 \leq2\times10^{-14}.
\]
The integrals are evaluated by adaptive Gauss--Kronrod quadrature.

In each example, the exact solution is unknown; thus, we use the double-mesh
principle \cite{Farr} to estimate the errors in the computed solutions. Let $U^N$ denote
the computed solution on the mesh $\{t_0,t_1,\ldots,t_N\}$, and let
$\widehat U^{2N}$ denote the computed solution on the mesh
$\{\widehat t_0,\widehat t_1,\ldots,\widehat t_{2N}\}$. According to the mesh
definition in Section~\ref{sec:collocation-existence}, $\widehat t_{2j}=t_j$ for
$j=0,1,\ldots,N$. We measure
$E(N):=\max_{0\leq j\leq N}\lvert U^N(t_j)-\widehat
U^{2N}(\widehat t_{2j})\rvert$ and compute the associated numerical rate by
$\text{Numerical Rate}:=\log_2\!\bigl(E(N)/E(2N)\bigr)$.

\setlength{\parindent}{0pt}
\medskip

\noindent\textbf{Case 1 test for uniform meshes.}
Let $T=1$ and $f(t)\equiv1$. For $r=1$, we test two choices of
$\alpha(u(t))$ that satisfy $\alpha_0>0$. For
$\alpha(u)=1/(u^2+1.5)$ one has $\alpha_0=2/5$, whereas for the second
choice one has $\alpha_0=1/14$. Corollary~\ref{cor6.5} predicts the basic
order $1-\alpha_0$, while Corollary~\ref{cor6.4} predicts the enhanced order
$2(1-\alpha_0)$ for the corresponding
superconvergent collocation parameters. The results in Tables~\ref{tab:m3-uniform-alpha-1} and~\ref{tab:m3-uniform-alpha-2}
show that the Gauss and $(1/3,1/2,2/3)$ orders are close to the basic order,
while the orders of Radau IIA and $(1/3,1/2,1)$ with the collocation parameter $c_m=1$ (then the mesh points $t_n$ are also collocation points) exhibit the predicted higher
convergence behaviour.

\begin{table}[H]
\centering \footnotesize
\caption{Errors and convergence rates for $m=3$, $\alpha(u)=1/(u^2+1.5)$, and $r=1$.}
\label{tab:m3-uniform-alpha-1}
\setlength{\tabcolsep}{6pt}
\begin{tabular}{@{}rcccc@{}}
\toprule
$N$ & Gauss & Radau IIA & $(1/3,1/2,1)$ & $(1/3,1/2,2/3)$ \\
\midrule
1024 & $\sci{3.0631}{-04}$ & $\sci{1.6884}{-06}$ & $\sci{6.1355}{-06}$ & $\sci{5.2246}{-04}$ \\
2048 & $\sci{2.0359}{-04}$ & $\sci{8.2395}{-07}$ & $\sci{3.0343}{-06}$ & $\sci{3.4686}{-04}$ \\
4096 & $\sci{1.3506}{-04}$ & $\sci{3.9791}{-07}$ & $\sci{1.4800}{-06}$ & $\sci{2.2995}{-04}$ \\
8192 & $\sci{8.9476}{-05}$ & $\sci{1.9054}{-07}$ & $\sci{7.1424}{-07}$ & $\sci{1.5227}{-04}$ \\
\midrule
Numer. Rate & 0.5941 & 1.0624 & 1.0511 & 0.5947 \\
\bottomrule
\end{tabular}
\end{table}

\begin{table}[H]
\centering \footnotesize
\caption{Errors and convergence rates for $m=3$, $\alpha(u)=[u-\tfrac12u(0)]^2/\{2[u^2+\tfrac14u^2(0)]+1\}$, and $r=1$.}
\label{tab:m3-uniform-alpha-2}
\setlength{\tabcolsep}{6pt}
\begin{tabular}{@{}rcccc@{}}
\toprule
$N$ & Gauss & Radau IIA & $(1/3,1/2,1)$ & $(1/3,1/2,2/3)$ \\
\midrule
256 & $\sci{1.2655}{-05}$ & $\sci{1.8754}{-08}$ & $\sci{1.1197}{-07}$ & $\sci{2.3131}{-05}$ \\
512 & $\sci{6.5722}{-06}$ & $\sci{5.5354}{-09}$ & $\sci{3.2837}{-08}$ & $\sci{1.1994}{-05}$ \\
1024 & $\sci{3.4314}{-06}$ & $\sci{1.6242}{-09}$ & $\sci{9.6034}{-09}$ & $\sci{6.2566}{-06}$ \\
2048 & $\sci{1.7967}{-06}$ & $\sci{4.7443}{-10}$ & $\sci{2.8005}{-09}$ & $\sci{3.2744}{-06}$ \\
\midrule
Numer. Rate & 0.9334 & 1.7755 & 1.7778 & 0.9341 \\
\bottomrule
\end{tabular}
\end{table}

\medskip
\noindent\textbf{Case 2 test for nonuniform meshes.}
Let $T=1$ and $f(t)=t\log t-t$, so $u(0)=f(0)=0$. We test
 $\alpha(u(t))=\frac{u^2(t)}{u^2(t)+3} \ \text{and}\ \alpha(u(t))=\frac{\sin^2(u(t))}{2u^2(t)+3}.$
For both choices, $\alpha_0=\alpha(u(0))=0$. Hence the critical grading
exponent for $m=3$ is $r=(m-1)/(1-\alpha_0)=2$. Corollary~\ref{cor6.4} gives the
critical-mesh estimate rate near or greater than 3 when
$r=2/(1-\alpha_0)$, it is supported from Tables~\ref{tab:m3-graded-alpha-1} and~\ref{tab:m3-graded-alpha-2} that the Gauss
and $(1/3,1/2,2/3)$ results display the expected $r$-order behaviour,
while the Radau IIA and $(1/3,1/2,1)$ choices produce higher terminal
orders.

\begin{table}[H]
\centering \footnotesize
\caption{Errors and convergence rates for $m=3$, $\alpha_0=0$, $\alpha(u)=u^2/(u^2+3)$, and $r=2$.}
\label{tab:m3-graded-alpha-1}
\setlength{\tabcolsep}{6pt}
\begin{tabular}{@{}rcccc@{}}
\toprule
$N$ & Gauss & Radau IIA & $(1/3,1/2,1)$ & $(1/3,1/2,2/3)$ \\
\midrule
256 & $\sci{3.9341}{-07}$ & $\sci{4.7187}{-12}$ & $\sci{3.0421}{-09}$ & $\sci{8.2322}{-07}$ \\
512 & $\sci{9.8348}{-08}$ & $\sci{3.1752}{-13}$ & $\sci{3.8236}{-10}$ & $\sci{2.0580}{-07}$ \\
1024 & $\sci{2.4587}{-08}$ & $\sci{2.1649}{-14}$ & $\sci{4.7941}{-11}$ & $\sci{5.1449}{-08}$ \\
\midrule
Numer. Rate & 2.0000 & 3.8745 & 2.9956 & 2.0000 \\
\bottomrule
\end{tabular}
\end{table}

\begin{table}[H]
\centering \footnotesize
\caption{Errors and convergence rates for $m=3$, $\alpha_0=0$, $\alpha(u)=\sin^2u/(2u^2+3)$, and $r=2$.}
\label{tab:m3-graded-alpha-2}
\setlength{\tabcolsep}{6pt}
\begin{tabular}{@{}rcccc@{}}
\toprule
$N$ & Gauss & Radau IIA & $(1/3,1/2,1)$ & $(1/3,1/2,2/3)$ \\
\midrule
256 & $\sci{3.9341}{-07}$ & $\sci{3.0675}{-12}$ & $\sci{2.8943}{-09}$ & $\sci{8.2322}{-07}$ \\
512 & $\sci{9.8348}{-08}$ & $\sci{2.0206}{-13}$ & $\sci{3.6340}{-10}$ & $\sci{2.0580}{-07}$ \\
1024 & $\sci{2.4587}{-08}$ & $\sci{1.3767}{-14}$ & $\sci{4.5536}{-11}$ & $\sci{5.1449}{-08}$ \\
\midrule
Numer. Rate & 2.0000 & 3.8755 & 2.9965 & 2.0000 \\
\bottomrule
\end{tabular}
\end{table}

\medskip
\noindent\textbf{Case 3 test for uniform meshes.}
Let $T=1$, $f(t)\equiv1$, and $r=1$. We test
$\alpha(u)=\frac12\bigl(1-\exp(-(u-u(0))^4)\bigr)$ and
$\alpha(u)=\frac12 (u-u(0))^4/[1+(u-u(0))^4]$, which satisfies $u(0)=1$, then both choices
satisfy $\alpha(u(0))=\alpha'(u(0))=\alpha''(u(0))=0$. The results in
Tables~\ref{tab:m3-case3-exp-quartic} and
\ref{tab:m3-case3-rational-quartic} support the third-order estimate in
Case~3 of Corollary~\ref{cor6.4}.

\begin{table}[H]
\centering \footnotesize
\caption{Errors and convergence rates for $m=3$, $\alpha(u)=\frac12\bigl(1-\exp(-(u-u(0))^4)\bigr)$, and $r=1$.}
\label{tab:m3-case3-exp-quartic}
\setlength{\tabcolsep}{6pt}
\begin{tabular}{@{}rcccc@{}}
\toprule
$N$ & Gauss & Radau IIA & $(1/3,1/2,1)$ & $(1/3,1/2,2/3)$ \\
\midrule
16  & $\sci{1.8676}{-6}$ & $\sci{6.1691}{-10}$ & $\sci{3.5489}{-7}$  & $\sci{4.1467}{-6}$ \\
32  & $\sci{2.3282}{-7}$ & $\sci{3.5621}{-11}$ & $\sci{4.4682}{-8}$  & $\sci{5.1716}{-7}$ \\
64  & $\sci{2.9048}{-8}$ & $\sci{2.1380}{-12}$ & $\sci{5.6038}{-9}$  & $\sci{6.4537}{-8}$ \\
128 & $\sci{3.6273}{-9}$ & $\sci{1.3234}{-13}$ & $\sci{7.0158}{-10}$ & $\sci{8.0597}{-9}$ \\
\midrule
Numer. Rate & 3.0015 & 4.0139 & 2.9977 & 3.0013 \\
\bottomrule
\end{tabular}
\end{table}

\begin{table}[H]
\centering \footnotesize
\caption{Errors and convergence rates for $m=3$, $\alpha(u)=\frac12 (u-u(0))^4/[1+(u-u(0))^4]$, and $r=1$.}
\label{tab:m3-case3-rational-quartic}
\setlength{\tabcolsep}{6pt}
\begin{tabular}{@{}rcccc@{}}
\toprule
$N$ & Gauss & Radau IIA & $(1/3,1/2,1)$ & $(1/3,1/2,2/3)$ \\
\midrule
16  & $\sci{1.8669}{-6}$ & $\sci{5.7098}{-10}$ & $\sci{3.4875}{-7}$  & $\sci{4.1453}{-6}$ \\
32  & $\sci{2.3277}{-7}$ & $\sci{3.2730}{-11}$ & $\sci{4.3935}{-8}$  & $\sci{5.1706}{-7}$ \\
64  & $\sci{2.9041}{-8}$ & $\sci{1.9502}{-12}$ & $\sci{5.5126}{-9}$  & $\sci{6.4521}{-8}$ \\
128 & $\sci{3.6265}{-9}$ & $\sci{1.1946}{-13}$ & $\sci{6.9019}{-10}$ & $\sci{8.0581}{-9}$ \\
\midrule
Numer. Rate & 3.0014 & 4.0290 & 2.9977 & 3.0013 \\
\bottomrule
\end{tabular}
\end{table}

\setlength{\tabcolsep}{6pt}
\renewcommand{\arraystretch}{1.00}
\medskip
\noindent\textbf{Example 2.}
For $m=4$, the algorithm is the same as for $m=3$, and we use four similar
collocation parameter sets: the four-point Gauss and Radau IIA sets, together
with the two fixed sets $(1/4,1/2,3/4,1)$ and $(1/5,2/5,3/5,4/5)$.

\medskip
\noindent\textbf{Case 1 test for nonuniform meshes.}
Let $T=0.5$ and $f(t)\equiv1$. We test
$\alpha(u)=2u^2/(3+2u^2)$, for which $\alpha_0=2/5$, and
$\alpha(u)=\sin^2u/(1+\sin^2u)$, for which
$\alpha_0=\sin^2(1)/(1+\sin^2(1))$.
Tables~\ref{tab:m4-f1-alpha-1} and~\ref{tab:m4-f1-alpha-2} give rate $3.0000$ for Gauss and $(1/5,2/5,3/5,4/5)$,
supporting the global estimate in Corollary~\ref{cor6.5}, while the higher rates of Radau IIA and
$(1/4,1/2,3/4,1)$ display additional collocation-dependent superconvergence as predicted in Corollary~\ref{cor6.4}.

\begin{table}[H]
\centering \footnotesize
\caption{Errors and convergence rates for $m=4$, $\alpha(u)=2u^2/(3+2u^2)$, $\alpha_0=2/5$, and $r=5$.}
\label{tab:m4-f1-alpha-1}
\begin{tabular}{@{}rcccc@{}}
\toprule
$N$ & Gauss & Radau IIA & $(1/4,1/2,3/4,1)$ & $(1/5,2/5,3/5,4/5)$ \\
\midrule
128 & $\ee{2.1536}{-8}$ & $\ee{6.0333}{-12}$ & $\ee{3.1993}{-10}$ & $\ee{5.2160}{-8}$ \\
256 & $\ee{2.6917}{-9}$ & $\ee{2.6046}{-13}$ & $\ee{2.1343}{-11}$ & $\ee{6.5196}{-9}$ \\
512 & $\ee{3.3645}{-10}$ & $\ee{1.1324}{-14}$ & $\ee{1.3957}{-12}$ & $\ee{8.1494}{-10}$ \\
\midrule
Numer. Rate & 3.0000 & 4.5236 & 3.9348 & 3.0000 \\
\bottomrule
\end{tabular}
\end{table}

\vspace{4mm}

\begin{table}[H]
\centering \footnotesize
\caption{Errors and convergence rates for $m=4$,
$\alpha(u)=\sin^2u/(1+\sin^2u)$,
$\alpha_0=\sin^2(1)/(1+\sin^2(1))$, and
$r=3(1+\sin^2(1))\approx5.1242203$.}
\label{tab:m4-f1-alpha-2}
\begin{tabular}{@{}rcccc@{}}
\toprule
$N$ & Gauss & Radau IIA & $(1/4,1/2,3/4,1)$ & $(1/5,2/5,3/5,4/5)$ \\
\midrule
128 & $\ee{2.4682}{-8}$ & $\ee{7.2792}{-12}$ & $\ee{3.5327}{-10}$ & $\ee{5.9459}{-8}$ \\
256 & $\ee{3.0850}{-9}$ & $\ee{3.1442}{-13}$ & $\ee{2.3693}{-11}$ & $\ee{7.4321}{-9}$ \\
512 & $\ee{3.8562}{-10}$ & $\ee{1.3545}{-14}$ & $\ee{1.5552}{-12}$ & $\ee{9.2901}{-10}$ \\
\midrule
Numer. Rate & 3.0000 & 4.5369 & 3.9293 & 3.0000 \\
\bottomrule
\end{tabular}
\end{table}

\medskip
\noindent\textbf{Case 2 test for nonuniform meshes.}
Let $T=1$ and $f(t)=t\log t-t$. We test
$\alpha(u)=u^2/(u^2+3)$ and $\alpha(u)=\sin^2u/(2u^2+3)$. For both choices,
$\alpha_0=\alpha(u(0))=0$. Tables~\ref{tab:m4-alpha-1} and~\ref{tab:m4-alpha-2} give rate $3.0000$ for Gauss
and $(1/5,2/5,3/5,4/5)$, independently supporting the theorem. The rates near $4.8$ for Radau IIA and near $4$ for
$(1/4,1/2,3/4,1)$ reveal the additional right-endpoint superconvergence.

\begin{table}[H]
\centering \footnotesize
\caption{Errors and convergence rates for $m=4$, $\alpha(u)=u^2/(u^2+3)$,  $\alpha_0=0$,
and $r=3$.}
\label{tab:m4-alpha-1}
\begin{tabular}{@{}rcccc@{}}
\toprule
$N$ & Gauss & Radau IIA & $(1/4,1/2,3/4,1)$ & $(1/5,2/5,3/5,4/5)$ \\
\midrule
64 & $\ee{9.4295}{-8}$ & $\ee{1.1508}{-11}$ & $\ee{3.5300}{-9}$ & $\ee{2.1956}{-7}$ \\
128 & $\ee{1.1787}{-8}$ & $\ee{4.0112}{-13}$ & $\ee{2.2887}{-10}$ & $\ee{2.7445}{-8}$ \\
256 & $\ee{1.4733}{-9}$ & $\ee{1.3878}{-14}$ & $\ee{1.4607}{-11}$ & $\ee{3.4306}{-9}$ \\
\midrule
Numer. Rate & 3.0000 & 4.8532 & 3.9697 & 3.0000 \\
\bottomrule
\end{tabular}
\end{table}

\vspace{4mm}

\begin{table}[H]
\centering \footnotesize
\caption{Errors and convergence rates for $m=4$, $\alpha(u)=\sin^2u/(2u^2+3)$, $\alpha_0=0$,
and $r=3$.}
\label{tab:m4-alpha-2}
\begin{tabular}{@{}rcccc@{}}
\toprule
$N$ & Gauss & Radau IIA & $(1/4,1/2,3/4,1)$ & $(1/5,2/5,3/5,4/5)$ \\
\midrule
64 & $\ee{9.4295}{-8}$ & $\ee{5.9444}{-12}$ & $\ee{3.4118}{-9}$ & $\ee{2.1956}{-7}$ \\
128 & $\ee{1.1787}{-8}$ & $\ee{2.0461}{-13}$ & $\ee{2.1898}{-10}$ & $\ee{2.7445}{-8}$ \\
256 & $\ee{1.4733}{-9}$ & $\ee{7.2164}{-15}$ & $\ee{1.3891}{-11}$ & $\ee{3.4306}{-9}$ \\
\midrule
Numer. Rate & 3.0000 & 4.8255 & 3.9786 & 3.0000 \\
\bottomrule
\end{tabular}
\end{table}

\medskip
\noindent\textbf{Case 3 test for uniform meshes.}
Let $T=1$, $f(t)\equiv1$, and $r=1$. We test
$\alpha(u)=\frac23\bigl(1-\exp(-(u-u(0))^4)\bigr)$ and
$\alpha(u)=\frac23 (u-u(0))^4/[1+(u-u(0))^4]$, which satisfies $u(0)=1$, and both choices
satisfy $\alpha(u(0))=\alpha'(u(0))=\alpha''(u(0))=\alpha'''(u(0))=0$. The results in
Tables~\ref{tab:m4-case3-exp-quartic} and
\ref{tab:m4-case3-rational-quartic} support the fourth-order estimate in
Case~3 of Corollary~\ref{cor6.4}.

\begin{table}[H]
\centering \footnotesize
\caption{Errors and convergence rates for $m=4$, $\alpha(u)=\frac23\bigl(1-\exp(-(u-u(0))^4)\bigr)$, and $r=1$.}
\label{tab:m4-case3-exp-quartic}
\begin{tabular}{@{}rcccc@{}}
\toprule
$N$ & Gauss & Radau IIA & $(1/4,1/2,3/4,1)$ & $(1/5,2/5,3/5,4/5)$ \\
\midrule
4  & $\ee{4.2006}{-6}$ & $\ee{7.5819}{-9}$  & $\ee{7.0228}{-7}$  & $\ee{1.1170}{-5}$ \\
8  & $\ee{2.7000}{-7}$ & $\ee{2.2127}{-10}$ & $\ee{4.6277}{-8}$  & $\ee{7.1339}{-7}$ \\
16 & $\ee{1.6812}{-8}$ & $\ee{6.9759}{-12}$ & $\ee{2.9629}{-9}$  & $\ee{4.4329}{-8}$ \\
32 & $\ee{1.0510}{-9}$ & $\ee{2.2776}{-13}$ & $\ee{1.8665}{-10}$ & $\ee{2.7774}{-9}$ \\
\midrule
Numer. Rate & 3.9997 & 4.9368 & 3.9886 & 3.9964 \\
\bottomrule
\end{tabular}
\end{table}

\vspace{4mm}

\begin{table}[H]
\centering \footnotesize
\caption{Errors and convergence rates for $m=4$, $\alpha(u)=\frac23 (u-u(0))^4/[1+(u-u(0))^4]$, and $r=1$.}
\label{tab:m4-case3-rational-quartic}
\begin{tabular}{@{}rcccc@{}}
\toprule
$N$ & Gauss & Radau IIA & $(1/4,1/2,3/4,1)$ & $(1/5,2/5,3/5,4/5)$ \\
\midrule
4  & $\ee{4.5028}{-6}$ & $\ee{6.8395}{-9}$  & $\ee{6.5196}{-7}$  & $\ee{1.1967}{-5}$ \\
8  & $\ee{2.9141}{-7}$ & $\ee{1.9296}{-10}$ & $\ee{4.5679}{-8}$  & $\ee{7.6980}{-7}$ \\
16 & $\ee{1.8185}{-8}$ & $\ee{6.0246}{-12}$ & $\ee{2.9189}{-9}$  & $\ee{4.7941}{-8}$ \\
32 & $\ee{1.1353}{-9}$ & $\ee{1.9024}{-13}$ & $\ee{1.8411}{-10}$ & $\ee{2.9997}{-9}$ \\
\midrule
Numer. Rate & 4.0016 & 4.9850 & 3.9868 & 3.9984 \\
\bottomrule
\end{tabular}
\end{table}
\endgroup

\vskip 1mm
{\footnotesize \textbf{Acknowledgements}
The research of Xiangcheng Zheng is supported in part by the National Natural Science Foundation of China No.~12622126, the National Key R\&D Program of China No.~2023YFA1008903, the National Natural Science Foundation of China No.~12301555, and the Natural Science
Foundation of Shandong Province No.~ZR2025QB01.
}

\end{document}